\theoremstyle{plain}
\newtheorem{defi}{Definition}[section]
\newtheorem{lemma}[defi]{Lemma}
\newtheorem{prop}[defi]{Proposition}
\newtheorem{thm}[defi]{Theorem}
\newtheorem{cor}[defi]{Corollary}
\newtheorem{notation}[defi]{Notation}
\theoremstyle{remark}
\newtheorem{rmk}{Remark}
\newtheoremstyle{mystyle}%                % Name
  {}%                                     % Space above
  {}%                                     % Space below
  {\itshape}%                             % Body font
  {}%                                     % Indent amount
  {\bfseries}%                            % Theorem head font
  {.}%                                    % Punctuation after theorem head
  { }%                                    % Space after theorem head, ' ', or \newline
  {}%                                     % Theorem head spec (can be left empty, meaning `normal')
\theoremstyle{mystyle}
\newcounter{note}
\DeclareMathAlphabet{\mathpzc}{OT1}{pzc}{m}{it}
\def\shO{\mathcal{O}}
\def\idI{\mathscr{I}}
\def\Proj{\mathbb{P}}
\def\cxC{\mathbb{C}}
\def\mapto{\longrightarrow}
\def\isom{\cong}
\def\codim{{\rm codim}\,}
\def\Ric{{\rm Ric\thinspace}}
\def\d{{\text{d}}}
\def\p{\partial}
\def\wg{\wedge}
\def\cxi{\sqrt{-1}}
\def\<{\langle}
\def\>{\rangle}
\def\dbar{\bar{\partial}}
\def\curv{\Theta}
\def\tens{\otimes}
\def\etens{\boxtimes}
\newcommand{\bigwg}[1]{\bigwedge\nolimits^{\! #1}}
\newcommand{\bigtens}[1]{\bigotimes\nolimits^{\! #1}}
\title[Syzygies of A Tower of Manifolds]{Syzygies of A Tower of Compact Local Hermitian Symmetric Spaces of Finite Type}
\author{Yih Sung}
\email{yih.sung@usu.edu}
\address{Dept.~of Mathematics and Statistics, Utah State University, Logan, UT 84322}
\begin{document}

%Set the enumerate environment
\renewcommand{\theenumi}{\alph{enumi}}

\maketitle

\begin{abstract}
Let $X$ be a $n$ dimensional compact local Hermitian symmetric space of non-compact type and $L=\shO(K_X)\tens\shO(qM)$ be an adjoint line bundle. Let $c>0$ be a constant. Assume the curvature of $M$ is $\ge c\omega$, where $\omega$ is the k\"ahler form of $X$, and $X$'s injectivity radius has a lower bound $\tau>\sqrt{2e}$, where $e$ is the Euler number. In this article, we prove that if $q>\frac{2e}{c\tau} \cdot (p+1)n$, then $L$ enjoys Property $N_p$. Applying this result to a tower of compact local Hermitian symmetric spaces $\cdots\mapto X_{s+1}\mapto X_s\mapto\cdots\mapto X_0=X$, we prove that $2K_{s}$ has Properties $N_p$ for $s\gg 0$ and fixed $p$. Based on the same technique, we show a criterion of projective normality of algebraic curves and a division theorem with small power difference.
\end{abstract}

%\tableofcontents

\section{Introduction}
\subsection{Background And Main Results}
Let $X$ be a compact local Hermitian symmetric space of non-compact type, namely $X$ can be written as $G/H$, where $G$ is a semi-simple Lie group of non-compact type and $H$ is a maximal compact subgroup. Under this natural setting, we consider a tower of manifolds $X_s=\Gamma_s\backslash G/H$ such that $\Gamma_{s+1}<\Gamma_s$ is a normal subgroup, which associates a sequence of finite maps
$$\cdots\mapto X_{s+1}\mapto X_s\mapto\cdots\mapto X_0=X.$$
In \cite{ref_yeung20} and \cite{ref_yeung_note1}, S.-K. Yeung shows that for $s\gg 0$ the canonical bundle $K_s$ is very ample and can separate the $k$-th jet. Inspired by the result of very ampleness, we investigate the properties of higher normality, namely Property $N_p$. In particular, Property $N_0$ corresponds to projective normality. We expect that for enough high tower covering, the higher normality should also be satisfied. In particular, if $X=B^n/\Gamma$ is a ball quotient with the injectivity radius $\rho_X$ and $L$ is numerically equivalent to $qK_X$, in \cite{ref_hwang&to13} J.-M. Hwang and W.-K. To show that $K_X\tens L$ enjoys Property $N_p$ if $p$ and $q$ satisfy
$$
\frac{p+1}{q}<\frac{2(n+1)}{n}\cdot\sinh^2\Big( \frac{\rho_X}{4} \Big).
$$
For fixed $q\ge 2$ and $p$, in the tower of $X$, $qK_{s}$ has Property $N_p$ for $s\gg 0$. In this article we want to generalize this result to local symmetric Hermitian spaces of non-compact type and show that $qK_s$ has Property $N_p$ for fixed $q\ge 2$, $p$ and $s\gg 0$. Our main theorem is
\begin{thm}\label{thm_main}
Let $X$ be a compact k\"ahler manifold and $\omega$ be the k\"ahler metric. Let the curvature form $R_M$ of the holomorphic line bundle $M$ satisfy $R_M\ge c\omega, c>0$. Fix an integer $q\ge 1$ and let $\tau$ be the injectivity radius of the manifold $X$. Then, if
\begin{enumerate}
\item $\tau$ is bounded from below: $\tau> \sqrt{2e}$, where $e$ is the Euler number and
$$
q\ge \frac{1}{c}\cdot\sqrt{\frac{2}{e}} (p+1)n >
\frac{2e}{c\tau} \cdot (p+1)n,
$$

\item or
$$
q> \frac{2e}{c}\Big( \frac{1}{\tau}+\frac{1}{\tau^2} \Big) \cdot (p+1)n,
$$
\end{enumerate}
$K_X+qM$ satisfies Property $N_p$.
\end{thm}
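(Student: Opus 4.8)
The plan is to establish Property $N_p$ via the cohomological criterion of Green–Lazarsfeld, which reduces the problem to the surjectivity of certain multiplication maps and the vanishing of Koszul cohomology groups $H^1$ of bundles of the form $K_X + qM \otimes \bigwg{p+1} M_L$, where $M_L$ is the kernel bundle (the syzygy bundle) defined by the evaluation sequence $0 \to M_L \to H^0(L)\otimes\shO_X \to L \to 0$. Following the approach of Hwang–To, the strategy is to obtain a pointwise (rather than merely global) vanishing statement by controlling the local geometry of $X$ near each point using the injectivity radius $\tau$. The key analytic input is a $\bar\partial$-estimate with multiplier: one wants to solve a $\dbar$-equation producing sections of $K_X + qM$ that separate $p+2$ infinitesimally near points, and the solvability hinges on a positivity estimate for the curvature of the twisting bundle.

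\medskip

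First I would set up the local model. Around a fixed point $x_0$, the injectivity radius bound $\tau > \sqrt{2e}$ guarantees that the geodesic ball $B(x_0,\tau)$ embeds and carries good coordinates in which $\omega$ and the curvature $R_M \ge c\omega$ are comparable to their flat counterparts. The next step is to construct a plurisubharmonic weight $\varphi$ with a prescribed logarithmic singularity of the form $\varphi \sim (n+1)\log\dist(\cdot,S)$ at a finite set $S$ of $p+2$ points (to force separation of jets), while retaining enough positivity from $qR_M$ so that the twisted curvature $qR_M + \cxi\,\p\dbar\varphi$ dominates $\Ric\,\omega$ in the Nakano or Bochner–Kodaira sense. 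The quantitative comparison between the required singularity order $(p+1)n$ and the available positivity $cq$ is exactly where the numerical hypothesis enters: the bound $q \ge \frac{1}{c}\sqrt{2/e}\,(p+1)n$ is what makes the weighted curvature positive after subtracting the contribution of the cutoff supported on a $\tau$-ball, and I expect the two alternative inequalities (a) and (b) to arise from two slightly different choices of cutoff profile (one optimized when $\tau$ is large, giving the $1/\tau$ term, the other giving the mixed $1/\tau + 1/\tau^2$ estimate valid without the lower bound on $\tau$).

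\medskip

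With the weight in hand, I would invoke the $L^2$ existence theorem for $\dbar$ (Hörmander–Demailly) on $(K_X + qM)$-valued $(0,1)$-forms to solve $\dbar u = v$ with the multiplier ideal controlling the singularity, thereby producing sections of $K_X + qM$ realizing the desired jet separation at the points of $S$. Translating this separation-of-jets statement into the vanishing of the relevant Koszul $H^1$ — equivalently into the surjectivity of the maps $H^0(L)\otimes H^0(K_X + qM) \to H^0(K_X + qM \otimes L)$ at each syzygy level up to $p$ — is the standard but technical core; here one runs the argument locally over each point and then globalizes, using that the estimate is uniform in $x_0$ because $\tau$ is a uniform lower bound for the injectivity radius.

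\medskip

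The main obstacle, I expect, is the sharp bookkeeping in the curvature computation: one must track the Euler number $e = \exp(1)$ that appears through the Gaussian-type extremal bound on a cutoff function supported in a ball of radius $\tau$, and ensure that the loss from the cutoff's gradient (which scales like $1/\tau$) and its Laplacian (scaling like $1/\tau^2$) is absorbed by the positivity $cq\omega$. Making the constant $\sqrt{2/e}$ (resp. the $\frac{1}{\tau}+\frac{1}{\tau^2}$ form) come out exactly as stated requires optimizing the cutoff profile, and I anticipate that balancing the singularity strength $(p+1)n$ against this loss — rather than any conceptual difficulty — is the delicate step that dictates both cases of the theorem.
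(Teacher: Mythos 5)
Your proposal has a genuine gap at its core reduction. Property $N_p$ is not implied by separation of jets at finitely many points: producing, via H\"ormander--Demailly $\dbar$-theory, sections of $K_X+qM$ with prescribed singular behavior at a set $S$ of $p+2$ points would at best yield higher-order embedding statements (jet ampleness), whereas $N_p$ concerns the shape of the minimal free resolution of the section ring --- even $N_0$ (projective normality) does not follow from very ampleness. The reduction actually available (used both in this paper and in Hwang--To) is Inamdar's criterion (Proposition \ref{prop_vanish}): one must prove
$$
H^1\big(X^m,\; q_1^*L^{\tens b}\tens q_2^*L\tens\cdots\tens q_m^*L\tens\idI_\Sigma\big)=0,
$$
where $\Sigma$ is the union of the pairwise diagonals $D_{i,m}$ in $X^m$. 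The singular loci one must handle are therefore the diagonals --- submanifolds of codimension $n$ in $X^m$ --- not finite point sets, so a weight of the form $\varphi\sim (n+1)\log\dist(\cdot,S)$ attached to $p+2$ points addresses the wrong geometry, and there is no standard translation of point/jet separation into the Koszul vanishing $H^1(X,\bigwg{a}M_L\tens L^b)=0$; your sketch does not supply one.

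The paper's mechanism is also different from the $\dbar$-solving you propose. It recasts the needed vanishing as surjectivity of restriction maps $H^0(X^{p+1},\cdot)\to H^0(D_{1,p},\cdot)$ onto a diagonal, blows up that diagonal so the extension center becomes a divisor $E$, and applies the Ohsawa--Takegoshi extension theorem on $Y=\Bl_{D}(X^{p+1})$. Your instinct about where $\tau$ and the Euler number $e$ enter is essentially right --- a cutoff $\chi$ with $|\chi'|\le(2+\epsilon')/\tau^2$ and the bound $1/\rho=e^{\chi}\le e$ --- but in the paper these appear through an explicit interpolated metric $h=\rho h_1+(1-\rho)h_2$ on $\shO_Y(E)$, matching the Fubini--Study-type metric of $\shO(-1)$ near $E$ with the trivial metric away from it; conditions (a) and (b) of Theorem \ref{thm_main} are exactly what makes $q\,\cxi\curv(M\etens\cdots\etens M)$ dominate $((p+1)n+\epsilon)\,\cxi\curv(E)$, so that every $L^2$ section on the (blown-up) diagonal extends. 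To repair your argument you would need to replace the point set $S$ by the diagonals, work on $X^{p+1}$ or its blow-up, and either prove the vanishing with weights singular along the diagonals (the Hwang--To route) or run the extension argument above; the jet-separation step as written cannot be turned into a proof of $N_p$.
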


If the tower of covering is enough high, the injectivity radius will approach infinity. Thus, we have the following direct implication.
\begin{cor}\label{cor_main}
Let $X$ be a compact local Hermitian symmetric space of non-compact type, and $L=qK_X$, $q\ge 2$. Let $\{X_s\}$ be a tower of covering of $X$. Then, for fixed $p$, there exists $s_0$ such that $L_s=qK_{X_s}$ satisfies Property $N_p$ for every $s\ge s_0$.
\end{cor}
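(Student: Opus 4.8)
The plan is to deduce the corollary directly from Theorem~\ref{thm_main} via the natural choice $M=K_{X_s}$, letting condition (b) do the work once the injectivity radius is large. First I would pin down the geometry along the tower. Each $X_s=\Gamma_s\backslash G/H$ is again a compact local Hermitian symmetric space of non-compact type, so it carries the $G$-invariant K\"ahler--Einstein metric $\omega_s$ descending from $G/H$, which I normalize so that $\Ric(\omega_s)=-\omega_s$. With this normalization the curvature of the canonical bundle, computed in the induced Hermitian metric, satisfies $R_{K_{X_s}}=\omega_s$; in particular $K_{X_s}$ is ample. Taking $\omega=\omega_s$ as the reference K\"ahler form and $M=K_{X_s}$, I obtain the hypothesis of Theorem~\ref{thm_main} with the curvature bound $R_M\ge c\,\omega$ holding for $c=1$, and this value of $c$ is uniform in $s$: the covering maps $X_{s+1}\mapto X_s$ are local isometries for these metrics, so all local curvature data are unchanged along the tower.

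Next I would recast the target bundle in adjoint form. Since $q\ge 2$, write
$$
L_s=qK_{X_s}=K_{X_s}+(q-1)K_{X_s},
$$
so that in the notation of Theorem~\ref{thm_main} the line bundle is $M=K_{X_s}$ and the multiplier is $\tilde q=q-1\ge 1$, which satisfies the standing requirement $\tilde q\ge 1$. Because $q$, $p$ and $n$ are all fixed, $\tilde q$ is a fixed positive integer; hence condition (a), which demands the $s$-independent lower bound $\tilde q\ge \tfrac1c\sqrt{2/e}\,(p+1)n$, cannot be invoked in general. This is precisely why condition (b) is the relevant alternative here.

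Finally I would use the defining feature of the tower: as $s\to\infty$ the injectivity radius $\tau_s$ of $X_s$ tends to infinity. The point is that a short closed geodesic of $X_s$ corresponds to a nontrivial element of $\Gamma_s$, and since $\Gamma_{s+1}\lhd\Gamma_s$ exhausts the identity, every fixed nontrivial element is eventually excluded, forcing the length of the shortest geodesic loop to grow without bound. Consequently, with $c=1$, the right-hand side of condition (b),
$$
\frac{2e}{c}\Big(\frac{1}{\tau_s}+\frac{1}{\tau_s^2}\Big)(p+1)n,
$$
tends to $0$, while the left-hand side $\tilde q=q-1\ge 1$ stays bounded below by a positive constant. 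Hence there is an $s_0$ such that condition (b) holds for all $s\ge s_0$, and Theorem~\ref{thm_main} then yields that $K_{X_s}+\tilde q\,K_{X_s}=L_s$ enjoys Property $N_p$, as claimed.

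I expect the main obstacle to be the rigorous justification of $\tau_s\to\infty$ (together with the uniformity of $c=1$ across the tower); once these two geometric inputs are in place, the rest is a direct bookkeeping application of Theorem~\ref{thm_main} through its second hypothesis.
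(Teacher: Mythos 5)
Your proposal is correct and follows essentially the same route as the paper, which treats Corollary~\ref{cor_main} as a direct implication of Theorem~\ref{thm_main}: write $qK_{X_s}=K_{X_s}+(q-1)K_{X_s}$ (the paper's ``$M=K_{X_s}$'' with the theorem's parameter equal to $q-1\ge 1$), use the normalized K\"ahler--Einstein metric so the curvature constant $c$ is uniform along the tower, and let the injectivity radius $\tau_s\to\infty$ kill the right-hand side of the numerical condition. Your observation that condition (b) rather than condition (a) is the one that must be invoked (since (a) imposes a $\tau$-independent lower bound on $q-1$) is a sharper articulation of the bookkeeping than the paper's one-line deduction, but it is the same argument.
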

Suggested by cohomological criterion of Property $N_0$, it has the format of division theorem. Let $L=K_X+M$. Property $N_0$ is equivalent to the surjectivity of the map
$$
\beta_k:H^0(X,L)\tens H^0(X,L^{\tens k})\mapto H^0(X,L^{\tens (k+1)})
$$
for every $k\ge 1$. If we intend to apply Skoda's division theorem \cite{ref_sko72} to $\beta_k$, $k$ has to be large. In this article, we remove this constrain by introducing the injectivity radius into the estimate.
\begin{cor}[Division Theorems with small power difference]\label{cor_div_thm_wi_smll_gaps}
Let $P_\tau\subset\cxC^n$ be a polydisc with side length $\tau$ with respect to a K\"ahler metric $\omega$. Let the curvature form $R_M$ of the holomorphic line bundle $M$ satisfy $R_M\ge c\omega, c\ge 0$. If
\begin{enumerate}
\item $\tau$ is bounded from below: $\tau> \sqrt{2e}$, where $e$ is the Euler number

\item and
$$
c>
\frac{2e}{k\tau}\cdot n,
$$
\end{enumerate}
the map $\beta_k:H^0(P_\tau,L)\tens H^0(P_\tau,L^{\tens k})\mapto H^0(P_\tau,L^{\tens (k+1)})$ is surjective.

\end{cor}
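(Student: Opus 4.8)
The plan is to read Corollary~\ref{cor_div_thm_wi_smll_gaps} as the special case $p=0$, $q=k$ of Theorem~\ref{thm_main} carried out on the single chart $X=P_\tau$, with the side length $\tau$ playing the role of the injectivity radius, and to produce the factorization directly by an $L^2$ argument. First I would reformulate surjectivity of $\beta_k$ as a division problem: since $P_\tau$ is contractible I trivialize $L=K_X+M$ by a holomorphic frame and write the Hermitian metric of $M$ as $e^{-\varphi}$ with $\cxi\p\dbar\varphi=R_M\ge c\omega$; then I choose $n$ sections $s_1,\dots,s_n\in H^0(P_\tau,L)$ that generate $L$ at every point (the coordinate functions against the frame), and for a given $g\in H^0(P_\tau,L^{\tens(k+1)})$ I seek $f_1,\dots,f_n\in H^0(P_\tau,L^{\tens k})$ with $g=\sum_j s_j f_j$. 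Starting from a smooth solution $g=\sum_j s_j h_j$ and correcting the $h_j$ by a $\dbar$-solution valued in the kernel of the map $(u_j)\mapsto\sum_j s_j u_j$ reduces the whole statement to solvability of a single $\dbar$-equation with an $L^2$ estimate in the Skoda/kernel bundle twisted by $L^{\tens k}$.

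Next I would run the twisted Bochner--Kodaira--Nakano inequality on $P_\tau$ with the weight $k\varphi+\log|s|^2$, where $|s|^2=\sum_j|s_j|^2$. Here the governing curvature operator is $k\,\cxi\p\dbar\varphi=kR_M$ acting on the $L^{\tens k}$ factor, corrected by the second fundamental form of the kernel bundle, which carries the only negative contribution. The positive part is controlled from below by $kR_M\ge kc\,\omega$, so the estimate closes as soon as this term dominates the negativity coming from the $\dbar s_j$.

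The decisive step is to bound that negative contribution on the finite-scale polydisc. Because $P_\tau$ has side length $\tau$, the relevant term $\sup\bigl(|\dbar s|^2/|s|^2\bigr)$ obeys a scale-$\tau$ estimate: a Poincar\'e/Bergman-kernel bound on $P_\tau$ shows its contribution to the curvature operator is at most $\tfrac{2en}{\tau}\,\omega$, the Euler number appearing upon optimizing an auxiliary Gaussian weight $e^{-a|z|^2}$ and the factor $n$ entering through the rank/number of generators. Combining this with $kR_M\ge kc\,\omega$, the full curvature operator is strictly Nakano-positive precisely when $kc>\tfrac{2en}{\tau}$, i.e. $c>\tfrac{2en}{k\tau}$, which is hypothesis~(b); the lower bound $\tau>\sqrt{2e}$ of hypothesis~(a) is exactly what makes the auxiliary weight integrable on $P_\tau$, equivalently what renders the optimized Gaussian admissible. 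Positivity then yields the $L^2$ solution of the $\dbar$-equation, hence holomorphic $f_j$ and the desired division $g=\sum_j s_j f_j$.

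I expect the scale-$\tau$ curvature estimate of the third paragraph to be the main obstacle: obtaining the \emph{sharp} constant $\tfrac{2en}{\tau}$ --- the linear decay in $\tau$ together with the precise Euler-number factor --- requires matching the Gaussian optimization to the geometry of $P_\tau$, and it is exactly the device that replaces the large-power hypothesis of the classical Skoda division theorem \cite{ref_sko72} by a lower bound on the injectivity radius. The remaining points (existence of the smooth solution, the constraint that the correction stay in the kernel bundle, and finiteness of all the weighted norms) are routine once this estimate is in place.
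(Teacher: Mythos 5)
Your route is genuinely different from the paper's, and it has a gap at exactly the decisive point. The paper never invokes a Skoda-type kernel-bundle argument: it recasts surjectivity of $\beta_k$ as an extension problem, identifying $H^0(P_\tau,L)\tens H^0(P_\tau,L^{\tens k})$ with $H^0(P_\tau\times P_\tau,L\etens L^{\tens k})$ and $H^0(P_\tau,L^{\tens(k+1)})$ with sections on the diagonal $D$, then blowing up $D$, endowing $\shO_Y(E)$ with the explicit metric $h=\rho h_1+(1-\rho)h_2$ built from the cut-off $\chi$, and applying Ohsawa--Takegoshi. In that framework every constant has a concrete origin: the factor $n$ is the codimension of $D$ entering through the blow-up formula for $K_Y$, so the extension theorem demands $(n+\epsilon)\cxi\curv(E)$ of positivity; hypothesis (a) (in its $\epsilon'$-form $\tau\ge\sqrt{e(2+\epsilon')}$) guarantees $e|\chi'|\le 1$, which lets the Fubini--Study negativity of the $\shO(-1)$ factor $h_1$ absorb term (e) of (\ref{pos_part_of_ddbar_log_h_v}); the polydisc hypothesis is used only to supply global coordinates from which the auxiliary psh weight $\eta_\tau$ with $\p\dbar\eta_\tau\ge e(2+\epsilon')/\tau$ is constructed (Section \ref{sec_normal_of_riemann_surf}), dominating term (d); and hypothesis (b), $kc>\frac{2en}{\tau}$, is exactly what makes the curvature of $M^{\tens k}$ meet the resulting requirement (\ref{final_est}).

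The gap in your proposal is its third paragraph: the scale-$\tau$ bound of the kernel-bundle negativity by $\frac{2en}{\tau}\,\omega$, with the Euler number produced by ``optimizing an auxiliary Gaussian weight,'' is asserted rather than proved, and you concede it is the main obstacle. That estimate \emph{is} the content of the corollary --- it is precisely what is supposed to replace Skoda's large-power hypothesis --- so deferring it leaves the proof empty where the difficulty sits; moreover there is no visible mechanism by which $e$ would emerge from a Gaussian, whereas in the paper it comes from the elementary bound $1/\rho=e^{\chi}\le e$ on the partition-of-unity factor, an artifact of the cut-off construction, and hypothesis (a) has nothing to do with integrability of any weight. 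A secondary but real flaw sits at the start of your reduction: the proposed generators $s_j$ (coordinate functions against a frame) all vanish at the center of $P_\tau$, so they do not generate $L$ there; the smooth decomposition $g=\sum_j s_j h_j$ need not exist, and division through these $s_j$ would force $g$ to vanish at that point. The natural repair --- adjoining the nowhere-vanishing frame section $s_0$ --- collapses the statement to the trivial identity $g=s_0\tens(s_0^{-1}g)$, which shows that the kernel-bundle formulation as you set it up does not capture the quantitative, constant-explicit $L^2$ content that the paper's extension argument establishes.
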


By using similar techniques, we can prove a theorem of projective normality of algebraic curves.
\begin{thm}\label{thm_N_0_property_of_curves}
Let $X$ be a Riemann surface and $\omega$ be a k\"ahler metric. Let the curvature form $R_M$ of the holomorphic line bundle $M$ satisfy $R_M\ge c\omega, c\ge 0$. Let $\tau$ be the injectivity radius of the manifold $X$. If $\tau> \sqrt{2e}$ and $c> \frac{2e}{\tau}$, then $K_X+M$ satisfies Property $N_0$.
\end{thm}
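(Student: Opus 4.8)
The plan is to prove Property $N_0$ through its cohomological reformulation recalled in the introduction: for $L=K_X+M$, Property $N_0$ holds if and only if the multiplication map
$$
\beta_k:H^0(X,L)\tens H^0(X,L^{\tens k})\mapto H^0(X,L^{\tens (k+1)})
$$
is surjective for every $k\ge 1$. I would therefore fix $k\ge 1$ and a section $s\in H^0(X,L^{\tens(k+1)})$, pick generators $g_0,\dots,g_N\in H^0(X,L)$ of $L$ (globally generated under the hypotheses, since the curvature and injectivity bounds force $\deg L$ to be large on the curve $X$), and seek holomorphic sections $h_0,\dots,h_N\in H^0(X,L^{\tens k})$ with $s=\sum_j g_j h_j$. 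Surjectivity of $\beta_k$ is precisely the solvability of this global division problem.

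The first step is to localize. Because the injectivity radius obeys $\tau>\sqrt{2e}$, I can cover $X$ by finitely many coordinate polydiscs $P_\tau^{(\alpha)}\subset\cxC$ of side length $\tau$. On each such $P_\tau^{(\alpha)}$ the local division underlying Corollary~\ref{cor_div_thm_wi_smll_gaps} applies to the global generators $g_j$ restricted to the polydisc: with $n=1$ and any $k\ge1$ the hypothesis $c>\tfrac{2e}{\tau}$ dominates the threshold $\tfrac{2e}{k\tau}\cdot n$ (the case $k=1$ being the most stringent), so the division $s=\sum_j g_j h_j^{(\alpha)}$ admits local holomorphic solutions $h_j^{(\alpha)}\in H^0(P_\tau^{(\alpha)},L^{\tens k})$.

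Next I would patch these into a global solution. Fixing a partition of unity $\{\rho_\alpha\}$ subordinate to $\{P_\tau^{(\alpha)}\}$ and setting $\tilde h_j=\sum_\alpha\rho_\alpha h_j^{(\alpha)}$ gives smooth sections with $\sum_j g_j\tilde h_j=s$ globally. To recover holomorphicity I would correct by a $\dbar$-solution: since $\sum_j g_j\,\dbar\tilde h_j=\dbar s=0$, the tuple $(\dbar\tilde h_j)_j$ is a $\dbar$-closed $(0,1)$-form valued in the syzygy bundle $\Ker\big(\bigoplus_{j} L^{\tens k}\to L^{\tens(k+1)}\big)$ defined by the $g_j$; solving $\dbar u=(\dbar\tilde h_j)_j$ within this bundle yields holomorphic $h_j=\tilde h_j-u_j$ with $\sum_j g_j h_j=s$ preserved. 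The curvature hypothesis $R_M\ge c\omega$ with $c>\tfrac{2e}{\tau}$ supplies the Nakano positivity required for H\"ormander's $L^2$ estimate to solve this equation with a global bound, and the resulting $h_j$ complete the proof that $\beta_k$ is onto, hence that $K_X+M$ enjoys Property $N_0$.

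The principal obstacle is the $L^2/\dbar$-step, and specifically the weight bookkeeping. The local divisions of Corollary~\ref{cor_div_thm_wi_smll_gaps} are produced against a Skoda-type singular weight built from $\log|g|^2$, and to run H\"ormander's estimate globally one must check that, after this weight is absorbed, the curvature of the twisted syzygy bundle stays strictly positive; this is precisely where the quantitative balance among $c$, $\tau$, and the Euler number $e$ enters and where the threshold $c>2e/\tau$ is forced. Once solvability with the correct sign of curvature is secured the argument closes, the one-dimensionality of $X$ simplifying the positivity computation since every relevant curvature term is a $(1,1)$-form on the curve.
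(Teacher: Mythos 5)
Your reduction of Property $N_0$ to the fixed-generator division problem $s=\sum_j g_j h_j$, the covering by coordinate discs, and the partition-of-unity setup are all reasonable, and the local solvability step is not in doubt (on a curve it is even elementary: on a Stein coordinate disc where the $g_j$ have no common zero, exactness of the Koszul complex plus Cartan's Theorem B already yields local divisors, so you do not really need Corollary \ref{cor_div_thm_wi_smll_gaps} there --- which is fortunate, since that corollary gives surjectivity of $\beta_k$ against \emph{arbitrary} local sections of $L$, not division by the restrictions of your fixed $g_j$, and since in the paper its proof in any case relies on the construction given in the proof of this very theorem). The genuine gap is the global correction step, and it is not a technical gap but the entire content of the theorem. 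Solving $\dbar u=(\dbar\tilde h_j)_j$ with values in the syzygy bundle $\Ker\big(\bigoplus_j L^{\tens k}\to L^{\tens(k+1)}\big)$, with estimates, is exactly the assertion $H^1\big(X,M'_L\tens L^{\tens k}\big)=0$, where $M'_L=\Ker\big(\bigoplus_j\shO_X\to L\big)$; by the cohomological criterion (Lemma \ref{lem_cohom_N_p} with $a=1$) this \emph{is} the surjectivity of $\beta_k$ you set out to prove. Your justification --- that $R_M\ge c\omega$ with $c>2e/\tau$ ``supplies the Nakano positivity required for H\"ormander's $L^2$ estimate'' --- is unsupported and, as stated, false: the syzygy bundle is not positive, its curvature carrying negative second-fundamental-form contributions, which is precisely why Skoda-type division forces a large power gap in $k$ (the paper's introduction makes exactly this point). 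Nothing in your argument actually uses the injectivity radius or the Euler number; you assert that the threshold $c>2e/\tau$ ``is forced'' at this step, but no mechanism is exhibited by which $\tau$ could enter a curvature computation for the syzygy bundle. In effect, the proposal assumes the conclusion at its decisive step.

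For contrast, the paper makes the role of $\tau$ concrete by a different route: surjectivity of $\beta_k$ is rewritten, via $V\tens H^0(X,L^{\tens(k-1)})\isom H^0(X\times X,L\etens L^{\tens(k-1)})$ and $H^0(X,L^{\tens k})\isom H^0(D,L\etens L^{\tens(k-1)})$, as an extension problem from the diagonal $D\subset X\times X$; one blows up $D$, endows $\shO_Y(E)$ with the explicit metric $h=\rho h_1+(1-\rho)h_2$ built from a cut-off $\chi$ supported at scale $\tau$ (this is where $\tau$ and $e$ enter, through $|\chi'|\le(2+\epsilon')/\tau^2$ and $1/\rho\le e$), exploits the one-dimensionality of $X$ to obtain a global coordinate on a fundamental domain (uniformization plus the Riemann mapping theorem) and hence a global weight $\eta_\tau$ with $\p\dbar\eta_\tau\ge e(2+\epsilon')/\tau$ that absorbs the remaining bad curvature term, and then applies the Ohsawa--Takegoshi extension theorem to the metric $e^{-(\varphi_M+\chi/q+\eta_\tau)}$. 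If you wish to salvage a division-style proof, you would have to import an equivalent quantitative input into the curvature analysis of $M'_L$; as written, your final step is circular.
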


\begin{rmk}
For conveniency, we will introduce two small positive constants $\epsilon$ and $\epsilon'$ in doing later estimates, so the conditions in Theorem \ref{thm_main} can be phrased in terms of $\epsilon$ and $\epsilon'$:
\begin{enumerate}
\item $\tau$ is bounded from below: $\tau\ge \sqrt{e(2+\epsilon')}$, where $e$ is the Euler number and
$$
q\ge \frac{1}{c}\cdot\sqrt{\frac{2+\epsilon'}{e}} (p+1)(n+\epsilon) \ge
\frac{e(2+\epsilon')}{c\tau} \cdot ((p+1)n+\epsilon),
$$

\item or
$$
q\ge
\frac{e(2+\epsilon')}{c}\Big( \frac{1}{\tau}+\frac{1}{\tau^2} \Big)
\cdot ((p+1)n+\epsilon),
$$
\end{enumerate}
and so are the conditions in Corollary \ref{cor_div_thm_wi_smll_gaps} and Theorem \ref{thm_N_0_property_of_curves}.
\end{rmk}

%...................................................................................................

\subsection{Contents}
This article is structured as follows: in Section \ref{sec_N_p_property} we review the definition and equivalent cohomological characterization of Property $N_p$. In Section \ref{sec_pf_of_main_thm}, we use the extension theorem to prove the main Theorem \ref{thm_main}. In Section \ref{sec_pf_of_cor_and_thm_on_curve} we use the techniques developed in Section \ref{sec_pf_of_main_thm} to show the projective normality of algebraic curves (Theorem \ref{thm_N_0_property_of_curves}) and division theorem with small power difference (Corollary \ref{cor_div_thm_wi_smll_gaps}).

%...................................................................................................

\section*{Acknowledgements}
We want to specially thank professor Sai-Kee Yeung for useful discussion and generous advice on this paper.

%...................................................................................................

\section{$N_p$ Properties}\label{sec_N_p_property}
%The main reference are \cite{ref_EL93}, \cite{ref_inamodar97}, and \cite{ref_lazarsfeld04} I, 1.8.D.
\subsection{Setting}
Let $X$ be an irreducible projective variety, and $L$ be a very ample line bundle on $X$ defining an embedding
$$
\phi_L:X\mapto \Proj=\Proj H^0(X,L).
$$
Consider the graded ring $R_L=R(X,L)=\bigoplus H^0(X,L^{\tens m})$ determined by $L$, and write $S={\rm Sym}\, H^0(X,L)$ for the homogeneous coordinate ring of $\Proj$. Then $R_L$ admits a free resolution $E_\bullet$:
$$
\xymatrix{
\cdots \ar[r] & \bigoplus_j S(-a_{2,j}) \ar[r] & 
\bigoplus_j S(-a_{1,j}) \ar[r] &
S \bigoplus \big(\bigoplus_j S(-a_{0,j})\big) \ar[r] &
R_L \ar[r] & 0
}.
$$
We hope the resolution for the first $p$ terms in $E_\bullet$ are as simple as possible. For example, every two adjacent grades are just different by $1$, i.e. 
\begin{defi}[Property $N_p$, \cite{ref_lazarsfeld04}, Definition 1.8.50]
The embedding line bundle $L$ satisfies Properties $N_p$ if $E_0=S$, and
$$a_{i,j}=i+1\;\text{ for all $j$}$$
whenever $1\le i\le p$.
\end{defi}

There is an alternative way to characterize Property $N_p$ by cohomologies. Let $X$ be a projective variety and $L$ be a line bundle generated by global sections. Then, there exists a natural exact sequence:
\begin{equation}\label{ex_seq_of_lin_bdl_gen_by_glob_sec}
\xymatrix{
0 \ar[r] & M_L \ar[r] & H^0(L)\tens\shO_X \ar[r]^-\iota & L \ar[r] & 0
},
\end{equation}
where $M_L$ is the kernel of $\iota$, and (\ref{ex_seq_of_lin_bdl_gen_by_glob_sec}) naturally induces a Koszul complex:
\begin{equation}
\xymatrix{
0 \ar[r] & \bigwedge^{p+1} M_L \ar[r] & \bigwedge^{p+1} H^0(L)\tens\shO_X
\ar[r] & \bigwedge^p M_L\tens L \ar[r] & 0
},
\end{equation}
where the map is described by
$$
f_{i_1,\cdots,i_{p+1}}e^{i_1}\wg\cdots\wg e^{i_p} \mapsto 
\sum (-1)^k f_{i_1,\cdots,i_{p+1}}s_{i_k} 
e^{i_1}\wg\cdots\widehat{e^{i_k}}\cdots\wg e^{i_{p+1}}.
$$

%...................................................................................................

\subsection{Cohomological Criterion of Property $N_p$}
Under the preceding setting, the criterion of Property $N_p$ is as follows.
\begin{lemma}[\cite{ref_EL93}, Lemma 1.6]\label{lem_cohom_N_p}
Assume that $L$ is very ample, and that $H^1(X,L^k)=0$ for all $k\ge 1$. Then $L$ satisfies Property $N_p$ iff
$$
H^1(X,\bigwg{a}M_L\tens L^b)=0,\; 
\text{ $\forall\, a\le p+1$ and $b\ge 1$. }
$$
\end{lemma}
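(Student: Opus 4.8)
The plan is to reduce Property $N_p$ to a vanishing statement in Koszul cohomology and then to match each group $H^1(X,\bigwg{a}M_L\tens L^b)$ with a single such Koszul group, by chasing the long exact sequences attached to the Koszul filtration of $\bigwg{a}(H^0(L)\tens\shO_X)$. Write $V=H^0(X,L)$. As $L$ is very ample it is globally generated, so the evaluation map $V\tens\shO_X\mapto L$ of \eqref{ex_seq_of_lin_bdl_gen_by_glob_sec} is surjective and $M_L$ is locally free. Since the quotient $L$ is a line bundle, taking exterior powers produces the short exact sequences
$$
0\mapto \bigwg{a}M_L\tens L^b \mapto \bigwg{a}V\tens L^b \mapto \bigwg{a-1}M_L\tens L^{b+1}\mapto 0
\qquad(\star_{a,b})
$$
for every $a\ge 1$ and every $b$; these are the only structures the argument needs.

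First I would record the (essentially definitional) translation of Property $N_p$ into Koszul cohomology. Let $K_{i,j}(X,L)$ be the cohomology in the middle of
$$
\bigwg{i+1}V\tens H^0(L^{j-1}) \mapto \bigwg{i}V\tens H^0(L^{j}) \mapto \bigwg{i-1}V\tens H^0(L^{j+1}).
$$
Because the graded Betti numbers of $R_L$ over $S$ are computed by these groups, $\dim\text{Tor}^S_i(R_L,\cxC)_{i+j}=\dim K_{i,j}(X,L)$, Property $N_p$ is equivalent to the vanishing $K_{i,j}(X,L)=0$ for all $0\le i\le p$ and all $j\ge 2$: the case $i=0$ is projective normality ($E_0=S$), while $1\le i\le p$ records linearity of the first $p$ syzygy steps.

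The heart of the proof is then the identification
$$
H^1\big(X,\bigwg{a}M_L\tens L^b\big)\isom K_{a-1,\,b+1}(X,L),\qquad b\ge 1.
$$
Taking global sections in $(\star_{a,b})$ and invoking $H^1(\bigwg{a}V\tens L^b)=\bigwg{a}V\tens H^1(L^b)=0$ — this is exactly where the hypothesis $H^1(X,L^b)=0$ is used — exhibits $H^1(\bigwg{a}M_L\tens L^b)$ as the cokernel of $\bigwg{a}V\tens H^0(L^b)\mapto H^0(\bigwg{a-1}M_L\tens L^{b+1})$. Taking global sections in $(\star_{a-1,b+1})$ identifies $H^0(\bigwg{a-1}M_L\tens L^{b+1})$ with the kernel of the Koszul differential $\bigwg{a-1}V\tens H^0(L^{b+1})\mapto \bigwg{a-2}V\tens H^0(L^{b+2})$, and composing the two maps realizes that cokernel as $K_{a-1,b+1}(X,L)$. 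Finally, as $a$ runs over $1\le a\le p+1$ and $b$ over $b\ge 1$, the pair $(i,j)=(a-1,b+1)$ sweeps out exactly $0\le i\le p$, $j\ge 2$; hence the simultaneous vanishing of all the $H^1(\bigwg{a}M_L\tens L^b)$ is equivalent to the Koszul vanishing above, and therefore to Property $N_p$.

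The main obstacle I anticipate is not the cohomology bookkeeping but the compatibility check in the last step: one must verify that the map $H^0(\bigwg{a}V\tens L^b)\mapto H^0(\bigwg{a-1}M_L\tens L^{b+1})$ coming from $(\star_{a,b})$, followed by the inclusion $H^0(\bigwg{a-1}M_L\tens L^{b+1})\hookrightarrow \bigwg{a-1}V\tens H^0(L^{b+1})$, really coincides with the Koszul differential $d$, so that its cokernel is genuinely the Koszul cohomology group rather than some a priori larger quotient. This is a naturality point: every arrow in the filtration $(\star_{\bullet,\bullet})$ is induced by the single evaluation section of $L$, so the squares relating different indices commute; but making this precise is the step that must be written with care.
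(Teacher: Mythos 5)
Your proposal is correct. Note that the paper offers no proof of this lemma at all---it is quoted verbatim from the cited reference (Ein--Lazarsfeld, Lemma 1.6)---and your argument is exactly the standard one behind that result: translate Property $N_p$ into the vanishing of the Koszul cohomology groups $K_{i,j}(X,L)$ for $0\le i\le p$, $j\ge 2$ via the identification of graded Betti numbers with $\mathrm{Tor}$ computed from the Koszul resolution, and then use the two short exact sequences $0\to \bigwedge^{a}M_L\tens L^{b}\to \bigwedge^{a}V\tens L^{b}\to \bigwedge^{a-1}M_L\tens L^{b+1}\to 0$ together with the hypothesis $H^1(X,L^k)=0$ to identify $H^1(X,\bigwedge^{a}M_L\tens L^{b})\isom K_{a-1,b+1}(X,L)$, with the index ranges matching exactly. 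Your flagged compatibility point (that the composite $\bigwedge^{a}V\tens L^{b}\to \bigwedge^{a-1}M_L\tens L^{b+1}\hookrightarrow \bigwedge^{a-1}V\tens L^{b+1}$ is the Koszul contraction) does hold, as one checks from a local splitting of the evaluation sequence, so the argument is complete.
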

In characteristic zero case, the wedge product is a direct summand of the tensor product. Therefore, $L$ will have Property $N_p$ if the following condition holds:
$$
H^1(X,\bigtens{a} M_L\tens L^b)=0,\; 
\text{ $\forall\, a\le p+1$ and $b\ge 1$. }
$$
In general, it is hard to deal with $M_L$ directly, so an improved version of vanishing condition is needed. The idea is to consider vanishing of cohomology groups on the product of $X$ rather than $X$ itself.
\begin{prop}[\cite{ref_inamodar97}, Lemma 1.5]\label{prop_vanish}
Let $L$ be an ample line bundle on a projective manifold $X$ with $H^1(X,L^{\tens k})=0$ for all $k\ge 1$. Then for an integral $\ell\ge 2$, $L$ satisfies Property $N_{\ell-2}$ if for all integers $m$ and $b$ satisfying $2\le m\le \ell$ and $b\ge 1$,
$$
H^1(X^m,q_1^* L^{\tens b}\tens q_2^* L
\tens\cdots\tens q_m^*L\tens \idI_\Sigma)=0,
$$
where $\Sigma=D_{1,m}\cup D_{2,m}\cup\cdots\cup D_{m-1,m}$ is the union of pairwise diagonals in $X\times\cdots\times X$.
\end{prop}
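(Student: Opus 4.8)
The plan is to deduce the statement from the cohomological criterion of Lemma~\ref{lem_cohom_N_p} by transporting the vanishing of $H^1$ on the product $X^m$ down to $X$ through a single projection. Since we work in characteristic zero, $\bigwg{a}M_L$ is a direct summand of $\bigtens{a}M_L$, so by Lemma~\ref{lem_cohom_N_p} it suffices to prove that $H^1(X,\bigtens{a}M_L\tens L^{\tens b})=0$ for every $a\le \ell-1$ and every $b\ge 1$. Writing $m=a+1$, this is exactly the range $2\le m\le\ell$, $b\ge 1$ occurring in the hypothesis, so the whole problem reduces to relating $\bigtens{m-1}M_L\tens L^{\tens b}$ on $X$ to the sheaf $\mathcal F:=q_1^*L^{\tens b}\tens q_2^*L\tens\cdots\tens q_m^*L\tens\idI_\Sigma$ on $X^m$.

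The bridge between the two is a direct image computation whose basic building block is the identity $M_L\isom p_{1*}(p_2^*L\tens\idI_\Delta)$ on $X\times X$, where $p_1,p_2$ are the projections and $\Delta$ the diagonal: tensoring the structure sequence of $\Delta$ with $p_2^*L$ and applying $p_{1*}$ turns the arrow into the evaluation map $H^0(L)\tens\shO_X\to L$, whose kernel is $M_L$ (the standing hypothesis $H^1(X,L^{\tens k})=0$ guarantees that no higher direct image disturbs this). I would iterate this over the $m-1$ arms. Regard $X^m$ as a family over the distinguished factor $q_1$ carrying $L^{\tens b}$, with the $m-1$ diagonals making up $\Sigma$ each identifying $q_1$ with one of the remaining factors. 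These diagonals meet pairwise transversally and each couples $q_1$ to a distinct factor, so relative to the projection $q=q_1$ the sheaf $\mathcal F$ is the fibrewise external tensor product of the arm-sheaves $q_i^*L\tens\idI_{D}$; a relative K\"unneth computation together with cohomology-and-base-change (the fibre $H^0(L\tens\idI_x)=\{s:s(x)=0\}$ has constant rank $h^0(L)-1$, so the direct image is locally free of the expected rank) yields
$$
q_*\big(q_1^*L^{\tens b}\tens q_2^*L\tens\cdots\tens q_m^*L\tens\idI_\Sigma\big)\isom \bigtens{m-1}M_L\tens L^{\tens b},
$$
the factor $L^{\tens b}$ being extracted from the direct image by the projection formula.

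With this identity established the conclusion is immediate from the Leray spectral sequence of $q$, whose five-term exact sequence begins
$$
0\mapto H^1\big(X,q_*\mathcal F\big)\mapto H^1\big(X^m,\mathcal F\big)\mapto H^0\big(X,R^1q_*\mathcal F\big),
$$
so the edge map realizes $H^1(X,\bigtens{m-1}M_L\tens L^{\tens b})$ as a subspace of $H^1(X^m,\mathcal F)$. The hypothesised vanishing of the latter therefore forces $H^1(X,\bigtens{m-1}M_L\tens L^{\tens b})=0$ for all $b\ge 1$ and all $2\le m\le\ell$; a fortiori $H^1(X,\bigwg{a}M_L\tens L^{\tens b})=0$ for $a\le \ell-1$, and Lemma~\ref{lem_cohom_N_p} gives Property $N_{\ell-2}$.

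The delicate point, on which I would spend the bulk of the effort, is the direct image identity: one must check that the ideal of the union $\Sigma$ genuinely factors as the relative tensor product of the individual diagonal ideals — this is precisely where transversality of the diagonals is used, and where the argument would break down if two of them shared more than the small diagonal — and that cohomology-and-base-change applies uniformly in the fibre, for which the hypothesis $H^1(X,L^{\tens k})=0$ is exactly what prevents the fibrewise $H^1$'s from jumping. The reduction through Lemma~\ref{lem_cohom_N_p} and the Leray edge-map argument are then formal.
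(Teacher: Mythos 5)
Your proposal is correct in substance, and it is essentially the argument of the cited source rather than a new route: the paper itself gives no independent proof of this Proposition (it is quoted from \cite{ref_inamodar97}), and Inamdar's proof is precisely the direct-image mechanism you describe, carried out as an induction that pushes forward one diagonal factor at a time via the projection formula and flat base change --- the unwound form of your relative K\"unneth identity. Your Leray five-term reduction and the characteristic-zero passage from $\bigwg{a}M_L$ to $\bigtens{a}M_L$, feeding into Lemma \ref{lem_cohom_N_p} (with the case $a=0$ covered by the standing hypothesis $H^1(X,L^{\tens k})=0$), are the same standard steps that the paper's Section \ref{sec_pf_of_main_thm} lemmas presuppose.

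Two caveats. First, you have silently re-indexed the diagonals: you take $\Sigma=D_{1,2}\cup\cdots\cup D_{1,m}$, all passing through the factor carrying $L^{\tens b}$, whereas the statement as printed puts $L^{\tens b}$ on $q_1$ but lets every diagonal pass through the last factor. Under the literal indexing, the direct image along the factor common to all diagonals comes out as $M_{L^{\tens b}}\tens\bigtens{(m-2)}M_L\tens L$ rather than $\bigtens{(m-1)}M_L\tens L^{\tens b}$, and the reduction to Lemma \ref{lem_cohom_N_p} would not go through. Your reading is the correct one --- it agrees with the paper's Lemma \ref{lem_ind_p=2_case}, where the high power $L^{\tens(b+1)}$ sits on the factor met by every diagonal --- but you are correcting a transcription slip in the statement, and that should be said explicitly rather than done tacitly. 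Second, constancy of the fibre rank plus Grauert/cohomology-and-base-change only identifies $q_*\mathcal{F}$ fibrewise; an abstract locally free sheaf with the right fibres is not yet $\bigtens{(m-1)}M_L\tens L^{\tens b}$. The missing ingredient is the natural comparison map, and it exists for exactly the reason you gesture at: transversality gives $\idI_\Sigma=\bigotimes_i\idI_{D_{1,i}}$ (product $=$ intersection of the ideals), hence $\mathcal{F}\isom q_1^*L^{\tens b}\tens\bigotimes_{i=2}^m\pi_{1i}^*(\idI_\Delta\tens p_2^*L)$, after which iterated projection formula and flat base change along $q_1$ yield $q_{1*}\mathcal{F}\isom\bigtens{(m-1)}M_L\tens L^{\tens b}$ with no appeal to Grauert at all. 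Note finally that this step, and indeed the very definition of $M_L$ as a vector bundle of rank $h^0(L)-1$, uses global generation of $L$, which is the standing assumption of Section \ref{sec_N_p_property} but is not implied by the ampleness hypothesis in the statement; that gap is inherited from the cited statement, not introduced by you.
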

This proposition has a direct implication.
\begin{thm}[\cite{ref_inamodar97}, Theorem 1.6]
Let $X$ be a projective variety and let $L$ be an ample line bundle on $X$. Then for every positive integer $p_0$, there exists a number $n_0$ such that $L^n$ has property $N_{p_0}$ for every $n\ge n_0$.
\end{thm}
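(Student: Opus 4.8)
The plan is to derive the statement from Proposition \ref{prop_vanish}, which converts Property $N_{p_0}$ into a finite list of cohomology vanishings on the self-products of $X$. Fix $p_0$ and set $\ell=p_0+2$, so that Property $N_{p_0}=N_{\ell-2}$ is what we want for the line bundle $L^n$. By Serre vanishing there is an $n_1$ with $H^1(X,L^j)=0$ for all $j\ge n_1$; taking $n\ge n_1$ secures the standing hypothesis $H^1(X,(L^n)^{\tens k})=0$ for all $k\ge 1$, uniformly in $k$ since the exponents $nk$ only increase. It then remains to prove, for every $2\le m\le\ell$ and every $b\ge 1$,
\[
H^1\big(X^m,\ A_{n,b}\tens\idI_\Sigma\big)=0,\qquad
A_{n,b}:=q_1^*L^{nb}\tens q_2^*L^n\tens\cdots\tens q_m^*L^n,
\]
where $\Sigma=\bigcup_{i=1}^{m-1}D_{i,m}$ is the union of the pairwise diagonals meeting the last factor.

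The main tool will be the structure sequence $0\to\idI_\Sigma\tens A_{n,b}\to A_{n,b}\to\shO_\Sigma\tens A_{n,b}\to 0$ and its long exact sequence, which reduces the problem to two inputs. First, $H^1(X^m,A_{n,b})=0$: since $A_{n,b}=L^{nb}\etens L^n\etens\cdots\etens L^n$ is an external tensor product, the K\"unneth formula writes $H^1(X^m,A_{n,b})$ as a sum of terms each carrying one factor $H^1(X,L^{nb})$ or $H^1(X,L^n)$, all of which vanish for $n\ge n_1$ (and $nb\ge n$), uniformly in $b$. Second, I must show the restriction $H^0(X^m,A_{n,b})\to H^0(\Sigma,\shO_\Sigma\tens A_{n,b})$ is surjective; granting both facts, the long exact sequence forces the desired vanishing.

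For the surjectivity I would pass to the projection $\pi:X^m\to X^{m-1}$ forgetting the last factor. Because every component of $\Sigma$ meets the $m$-th factor, $\Sigma$ is finite over $X^{m-1}$ with fibre $Z=\{x_1,\dots,x_{m-1}\}$ over $(x_1,\dots,x_{m-1})$, while $A_{n,b}$ restricts to $L^n$ on each fibre $X$. For $n$ large, $L^n$ is $p_0$-very ample, so its sections separate every subscheme of length at most $p_0+1$; as $m-1\le\ell-1=p_0+1$, the fibrewise evaluation $H^0(X,L^n)\to H^0(Z,L^n|_Z)$ is surjective for every configuration. Cohomology and base change then yield a surjection $\pi_*A_{n,b}\twoheadrightarrow\pi_*(\shO_\Sigma\tens A_{n,b})$ together with $R^1\pi_*(\idI_\Sigma\tens A_{n,b})=0$ (the higher direct images of $A_{n,b}$ and of $\shO_\Sigma\tens A_{n,b}$ vanish by Serre vanishing on fibres and by finiteness of $\Sigma\to X^{m-1}$). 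By the Leray spectral sequence the target vanishing becomes
\[
H^1\big(X^m,\idI_\Sigma\tens A_{n,b}\big)\isom
H^1\big(X^{m-1},\ \mathscr{M}_n\tens B\big),
\]
where $\mathscr{M}_n:=\pi_*(\idI_\Sigma\tens q_m^*L^n)$ is a syzygy-type vector bundle and $B:=q_1^*L^{nb}\tens q_2^*L^n\tens\cdots\tens q_{m-1}^*L^n$ is ample on $X^{m-1}$.

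This reduced statement has the same shape as the original with $m$ replaced by $m-1$, so I would close by induction on $m$, the base case $m=2$ being $H^1(X,M_{L^n}\tens L^{nb})=0$ for the kernel bundle $M_{L^n}$ on $X$ itself. The main obstacle is the uniformity of this last vanishing: the ample twist $B$ must overpower the Castelnuovo--Mumford regularity of $\mathscr{M}_n$ not for a single $b$ but uniformly over all $b\ge 1$ and, crucially, as $n\to\infty$, since both $\mathscr{M}_n$ and $B$ scale with $n$. The dependence on $b$ is eliminated by peeling off the nef factor $q_1^*L^{n(b-1)}$ and invoking a uniform Serre-type vanishing for nef twists, reducing everything to $b=1$; the genuine difficulty is then to bound the regularity of the bundles $\mathscr{M}_n$ against the linearly growing positivity of $B$. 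It is precisely this positivity-versus-regularity balance that governs how large $n_0$ must be chosen.
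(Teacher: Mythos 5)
You start exactly where the paper does: the theorem is quoted from \cite{ref_inamodar97} as a direct consequence of Proposition \ref{prop_vanish}, and your handling of the standing hypothesis $H^1(X,(L^n)^{\tens k})=0$ (Serre vanishing) and of $H^1(X^m,A_{n,b})=0$ (K\"unneth) is correct. But the argument does not close, and you say so yourself. The reduction via $\pi:X^m\to X^{m-1}$ ends at $H^1(X^{m-1},\mathscr{M}_n\tens B)=0$, and this is \emph{not} ``the same shape as the original with $m$ replaced by $m-1$'': $\mathscr{M}_n=\pi_*(\idI_\Sigma\tens q_m^*L^n)$ is an $n$-dependent kernel sheaf, not the ideal sheaf of a union of diagonals on $X^{m-1}$, so the induction on $m$ has no statement to induct on; moreover even your proposed base case, $H^1(X,M_{L^n}\tens L^{nb})=0$, is itself left unproven. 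The ``positivity-versus-regularity balance'' you leave open at the end is precisely the content of the theorem, so what you have is a reformulation, not a proof. There is also a concrete error in the fibrewise step: for $\dim X\ge 2$ the scheme-theoretic fibres of $\Sigma\to X^{m-1}$ are not reduced sets of at most $m-1$ points. The diagonals $D_{i,m}$ meet properly, so locally $\idI_\Sigma$ is the product of the ideals $\idI_{D_{i,m}}$; already for $m=3$ the fibre over a point of the small diagonal is cut out by $\idM_x^2$ and has length $\dim X+1$, which exceeds $p_0+1=2$ when $p_0=1$. Separation of subschemes of length $p_0+1$ therefore does not suffice; you would need $k$-very ampleness for $k$ on the order of $\binom{\dim X+m-2}{\dim X}$. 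This part is fixable, since such lengths are still bounded and $L^n$ is $k$-very ample for $n\gg 0$, but the step as written fails.

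The missing idea --- the one that makes the theorem the ``direct implication'' the paper asserts --- is Fujita's vanishing theorem, applied on $X^m$ to the \emph{fixed} coherent sheaf $\idI_\Sigma$. Since $\mathcal{A}=L\etens L\etens\cdots\etens L$ is ample on $X^m$ and $\idI_\Sigma$ depends on neither $n$ nor $b$, Fujita's theorem provides $n_0(m)$ such that $H^1(X^m,\idI_\Sigma\tens\mathcal{A}^{\tens n}\tens N)=0$ for every $n\ge n_0(m)$ and \emph{every} nef line bundle $N$. Taking $N=q_1^*L^{n(b-1)}$, which is nef for all $b\ge 1$, and noting $A_{n,b}=\mathcal{A}^{\tens n}\tens q_1^*L^{n(b-1)}$, this is exactly the required vanishing, uniformly in $b$; one then sets $n_0=\max\bigl\{n_1,\max_{2\le m\le p_0+2}n_0(m)\bigr\}$ over the finitely many relevant $m$. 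No pushforward, no base change, and no induction on $m$ is needed. Note that you name this very tool (``a uniform Serre-type vanishing for nef twists'') but deploy it only to peel off the factor $q_1^*L^{n(b-1)}$ after your reduction, where it can no longer finish the job because $\mathscr{M}_n$ moves with $n$; applied directly to $\idI_\Sigma$ upstairs on $X^m$, it is the whole proof.
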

Remark that in this theorem, $L$ has to be raised to enough high power without upper bound estimate, but in our Theorem \ref{thm_main}, the required power of the line bundle is effective and extplicit.

\section{Proof of The Main Theorem}\label{sec_pf_of_main_thm}
In this section we will break the proof of Theorem \ref{thm_main} into several steps. Basically, we will follow the framework proposed in \cite{ref_inamodar97}. Nervertheless, instead of applying Kodaira Vanishing theorem, we will use extension theorems in oder to obtain the effective power of $L=qK_X$. Throughout this section, we will assume
\begin{equation}\label{eq_assump_of_vanishing}
H^1(X,L^b)=0
\end{equation}
for all $b\ge 1$. Remark that the assumption of Theorem \ref{thm_main} satisfies this condition since $R_M>0$.

\subsection{$p=0$ case} According to the argument in \cite{ref_inamodar97}, it is sufficient to justify the conditions in the following lemma to show Property $N_0$.
\begin{lemma}[Lemma 1.1, \cite{ref_inamodar97}] \label{lem_ind_base_case}
Denote $\idI_D$ the ideal sheaf of the diagonal embedding of $X$ in $X^2=X\times X$. Then
\begin{enumerate}
\item $H^0(X,M_L\tens L^{\tens(b+1)})=H^0(X^2,\idI_D\tens L\tens L^{\tens(b+1)}$),

\item $H^1(X^2,\idI_D\tens L\tens L^{\tens(b+1)})=0\Rightarrow H^1(X,M_L\tens L^{\tens(b+1)})=0$,
\end{enumerate}
for every $b\ge 1$.
\end{lemma}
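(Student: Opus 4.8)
The plan is to realize $M_L$ on $X$ as a direct image of a twisted ideal sheaf from $X^2$, and then transport cohomology between the two spaces by a Leray argument. Write $q_1,q_2:X^2\mapto X$ for the two projections and set $\shF=q_1^*L^{\tens(b+1)}\tens q_2^*L$, so that the bundle in the statement is $\idI_D\tens\shF$ (up to interchanging the two factors). I would start from the structure sequence of the diagonal and tensor it with the locally free sheaf $\shF$, obtaining
$$
0\mapto \idI_D\tens\shF\mapto\shF\mapto\shO_D\tens\shF\mapto 0 .
$$
Since $q_1|_D:D\xrightarrow{\sim}X$ and $q_1|_D=q_2|_D$, the restriction $\shF|_D$ is canonically identified with $L^{\tens(b+2)}$.

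Next I would apply $Rq_{1*}$ to this sequence. By the projection formula together with the base change identity on a product $R^iq_{1*}(q_2^*L)\isom H^i(X,L)\tens\shO_X$, one gets $q_{1*}\shF\isom H^0(L)\tens L^{\tens(b+1)}$ and $R^1q_{1*}\shF\isom H^1(X,L)\tens L^{\tens(b+1)}=0$ by the standing assumption \eqref{eq_assump_of_vanishing}; moreover $q_{1*}(\shO_D\tens\shF)\isom L^{\tens(b+2)}$ with vanishing higher images, since $q_1|_D$ is an isomorphism. The heart of the argument is to verify that the map
$$
q_{1*}\shF\isom H^0(L)\tens L^{\tens(b+1)}\mapto q_{1*}(\shO_D\tens\shF)\isom L^{\tens(b+2)}
$$
induced by restriction to $D$ is exactly the multiplication map, i.e. $L^{\tens(b+1)}$ tensored with the evaluation map $H^0(L)\tens\shO_X\mapto L$ of \eqref{ex_seq_of_lin_bdl_gen_by_glob_sec}. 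Granting this, left-exactness of $q_{1*}$ identifies $q_{1*}(\idI_D\tens\shF)$ with the kernel of multiplication, namely $M_L\tens L^{\tens(b+1)}$; and since multiplication is surjective ($L$ is globally generated) while $R^1q_{1*}\shF=0$, the long exact sequence of higher direct images forces $R^1q_{1*}(\idI_D\tens\shF)=0$.

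Finally I would descend to $X$ via the Leray spectral sequence for $q_1$ applied to $\idI_D\tens\shF$. The equality $H^0(X^2,\idI_D\tens\shF)=H^0\big(X,q_{1*}(\idI_D\tens\shF)\big)=H^0(X,M_L\tens L^{\tens(b+1)})$ is immediate and yields (a) unconditionally. For (b) the five-term exact sequence reads
$$
0\mapto H^1(X,M_L\tens L^{\tens(b+1)})\mapto H^1(X^2,\idI_D\tens\shF)\mapto H^0\big(X,R^1q_{1*}(\idI_D\tens\shF)\big)=0,
$$
so vanishing of the middle group forces vanishing of the left group; in fact the two $H^1$'s are isomorphic, which is exactly the claimed implication.

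The step I expect to be the only genuine obstacle is the identification in the second display: unwinding the canonical isomorphism $q_{1*}q_2^*L\isom H^0(L)\tens\shO_X$ and checking its compatibility with restriction to the diagonal, so as to recognize the pushed-forward restriction map as the evaluation map defining $M_L$. Once this bookkeeping is settled, everything else is formal — base change on a product, left-exactness of $q_{1*}$, global generation of $L$, and the Leray five-term sequence.
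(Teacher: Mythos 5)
Your proof is correct, but it is worth pointing out that the paper itself never proves this lemma: it is quoted verbatim from Inamdar (Lemma 1.1 of \cite{ref_inamodar97}), and the paper's effort goes entirely into verifying the lemma's \emph{hypothesis} --- the vanishing $H^1(X^2,\idI_D\tens L\etens L^{\tens(b+1)})=0$ --- by extension theorems on the blow-up of the diagonal, rather than by the Kodaira--Viehweg vanishing used in \cite{ref_inamodar97}. So your direct-image argument is a self-contained substitute for the citation, and it is sound: tensoring $0\to\idI_D\to\shO_{X^2}\to\shO_D\to 0$ with the locally free $\shF$, computing $q_{1*}\shF\isom H^0(L)\tens L^{\tens(b+1)}$ and $R^1q_{1*}\shF\isom H^1(X,L)\tens L^{\tens(b+1)}=0$ by projection formula and base change on the product, identifying the pushed-forward restriction with the evaluation map (the one genuinely non-formal step, which you correctly isolate --- it follows by checking fibers: over $x\in X$ the map $H^0(L)\to L_x$ is evaluation at the diagonal point), and then invoking left-exactness, global generation of $L$, and the Leray five-term sequence. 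Note that you correctly chose to push along the projection for which the \emph{single} copy of $L$ sits on the fiber direction; pushing the other way would produce $M_{L^{\tens(b+1)}}$ instead of $M_L$. Two small dividends of your route over the bare citation: part (a) comes out unconditionally (no vanishing or generation hypotheses needed), and in part (b) you actually get an isomorphism $H^1(X,M_L\tens L^{\tens(b+1)})\isom H^1(X^2,\idI_D\tens L\etens L^{\tens(b+1)})$, which is stronger than the stated one-way implication. Both standing hypotheses you invoke --- $L$ globally generated and $H^1(X,L)=0$ --- are indeed part of the paper's framework (the latter is assumption (\ref{eq_assump_of_vanishing})), so nothing is smuggled in.
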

The main ingredient of the the proof in \cite{ref_inamodar97} is Kodaira-Viehweg vanishing theorem. Instead of applying the vanishing theorem, we prove the vanishing by extending the sections in the cohomology groups. Let
\begin{equation}
V=H^0(X,L).
\end{equation}
Consider exact sequence (\ref{ex_seq_of_lin_bdl_gen_by_glob_sec}) and tensor it with $L^{\tens(b+1)}$, which induces a long exact sequence
\begin{equation}\label{ex_seq_of_lin_bdl_gen_by_glob_sec_ind}
\xymatrix{
0 \ar[r] & H^0(M_L\tens L^{\tens (b+1)}) \ar[r] & 
V\tens H^0(L^{\tens(b+1)}) \ar[r]^-\iota & H^0(L^{\tens(b+2)}) & \\
\ar[r] & H^1(M_L\tens L^{\tens(b+1)}) \ar[r] & H^1(L^{\tens(b+1)})=0. & &
}
\end{equation}
The last term is vanishing because of the assumption (\ref{eq_assump_of_vanishing}). Thus, if we can show the map $\iota$ is surjective, which implies $H^1(X,M_L\tens L^{\tens(b+1)})=0$. Then the original arguments in \cite{ref_inamodar97} follow and we are done.

\subsubsection{Setting for Applying Extension Theorem}\label{subsubsec_setting_for_applying_ext_thm}
Before we procced the extension theorem, let us treat the exact sequence (\ref{ex_seq_of_lin_bdl_gen_by_glob_sec_ind}) as the cohomology groups of line bundle on $X\times X$. Consider the exact sequence on $X\times X$: 
$$
\xymatrix{
0 \ar[r] & 
L\tens L^{b+1}\tens \idI_D \ar[r]& 
L\tens L^{b+1} \ar[r]^-{res} & L^{b+2}\tens\shO_D
\ar[r] & 0,
}
$$
where $D\mapto X\times X$ is the diagonal embedding of $X$. Then, there are natural isomorphisms:
\begin{gather*}
V\tens H^0(X,L^{\tens(b+1)}) \isom 
H^0(X\times X,\pi_1^*L\tens \pi_2^*L^{\tens (b+1)}) \\
H^0(X,L^{\tens(b+2)})\isom H^0(D,\pi_1^*L\tens \pi_2^*L^{\tens (b+1)}).
\end{gather*}
\begin{notation}
Let $L,M$ be line bundles on $X$. We will denote $L\etens M$ the line bundle $\pi_1^*L\tens \pi_2^*M$ on $X\times X$ for short later.
\end{notation}
Hence the extension problem of the map $\iota$ becomes a problem of extending sections of $L\etens L^{\tens(b+1)}$ on the diagonal $D\subset X\times X$. Let $\dim X=n$. Since the codimension of $D\subset$ in $X\times X$ is greater than $1$ if $\dim X\ge 2$, we need to blow up $D$ on $X\times X$ to fix this issue. Let $\alpha  :Y=Bl_D X\times X\mapto X\times X$ be the blowup, and we have the following diagram
$$
\xymatrixcolsep{0.3pc}\xymatrix{
E \ar[d] & \subset & Y\ar[d]^-\alpha \\
D & \subset & X\times X
}
$$
where $E$ is the exceptional divisor. Then, we turn to consider the extension problem on $Y$:
$$
\xymatrix{
H^0(Y,\alpha^* L\etens L^{\tens(b+1)}) \ar[r]\ar[d]^\parallel &
H^0(E,\alpha^* L\etens L^{\tens(b+1)})\ar[d]^\parallel \\
H^0(X\times X, L\etens L^{\tens(b+1)})\ar[r] &
H^0(D, L\etens L^{\tens(b+1)})
}.
$$
By the Ohsawa-Takegoshi theorem (\cite{ref_ohsawa04} theorem 1.1), we need to justify the curvature condition:
\begin{align*}
\cxi\curv(\alpha^* L\etens L^{\tens(b+1)}) + \Ric_{Y} \ge 
(1+\epsilon)\cxi \curv(E) \\
\Longleftrightarrow \cxi\curv(\alpha^* L\etens L^{\tens(b+1)}) \ge
\cxi\curv(\alpha^* K_X\etens K_X)+(n+\epsilon)\cxi\curv(E)
\end{align*}
where $0<\epsilon\ll 1$. Here we utilize the blowup formula 
\begin{equation*}
\Ric_Y=-K_Y=-(\alpha^*K_{X\times X}+({\rm codim}\,D-1)E)
\end{equation*}
and $\codim D=2n-n=n$. In particular, if $L=(q+1)K_X$ and $q\ge 1$, the above curvature condition becomes
$$
\cxi\curv(\alpha^*K_X\etens K_X)+
\cxi\curv(\alpha^*(p_1^* K_X^{\tens (q-1)}+p_2^* 
K_X^{\tens ((q+1)b+(q-1))}) \ge 
(n+\epsilon)\cxi\curv(E).
$$
Since $\cxi\curv(K_X)> 0$, we only need to require
\begin{equation}\label{curv_cond_for_N_0}
q\cxi\curv(\alpha^*K_X\etens K_X)\ge (n+\epsilon)\cxi\curv(E).
\end{equation}
Note that $X\times X$ is also a Hermitian symmetric space of non-compact type because $\cxi\curv(K_{X\times X})>0$ and bounded below by using the Hermitian-Einstein metric. Let us examine the bundle $\shO_Y(E)$ closely. Choose an appropriate hypersurface $H$ on $X\times X$ such that $\shO_Y(E)$ is trivial on $U=X\times X-H$, namely, the transition function $h_{\alpha\beta}$ on the intersection of open sets $U_\alpha\cap U_\beta$ is $1$. Let $U_D=(E-H)|_{D}\subset (D-H)$ is a local open set. After blowing up, the exceptional divisor $E\mapto D$ is a $\Proj^{n-1}$ projective bundle and 
$$E|_{U_D}\cong D\times \Proj^{n-1}.$$
Let $x=(x_1,x_2)\in D\subset X\times X$, take $\Omega=B_\tau(x_1)\times B_\tau(x_2)\subset X\times X$, so that in $\Omega$, $X\times X$ can be seen as flat. Take an open set $U'\subset U$ if necessary so that $E|_{U'}\cong D\times \cxC^{n-1}\subset Y|_{U'}$. Let $\Omega'=\Omega\cap U'$ Then, on $\Omega'$, we are able to choose local coordinates
$$z_1,\cdots,z_n$$
for $D|_{\Omega'}$,
$$w_1,\cdots,w_{n-1}$$ 
for the exceptional direction, and extend the set to
$$
w_1,\cdots,w_{n-1},z_1,\cdots,z_n,z_{n+1},
$$
which is the coordinate system of $Y|_{U'}$. In particular we can choose $z_{n+1}$ so that
$$
E= \{z_{n+1}=0\}.
$$

%Locally $p:Y\mapto X$ at $x$ is nothing more than the usual blowup a point in $\cxC^{n}_x$, and by the product structure the fiber $\Proj^{n-1}_x$ has no second fundamental form along the parametrized direction, i.e. along $x\in U$ the fiber $\Proj^{n-1}_x$ is not twisting. Therefore we only have to be concerned with the curvature along the fiber direction, and then we reduce to exam the curvature of $\shO(E_x)$ on each $\cxC^n_x$.

\subsubsection{Metric of $\shO_Y(E)$}\label{subsec_curv_of_shO_Y(E)}
By the standard technique in proving the Kodaira vanishing theorem, we take a two-open sets covering to cover $Y|_{U'}$. Denote
$$
P_\epsilon=\{x=(w,z)\in Y|_{U'} ; |z_{n+1}|<\epsilon \},
$$
and then consider 
$$
V_1=P_{2\epsilon}\, \text{ and } \,V_2=Y|_{U'}-P_\epsilon.
$$
Fix $0<\epsilon'\ll 1$ a small constant, and we need a technical lemma to construct a special cut-off function.
\begin{lemma} 
There exists a cut-off function $\chi$ such that
\begin{gather}
\notag\chi(t)=1,t\le \frac{\tau}{2},\;\; \chi(t)=0,t\ge\tau \\
\label{est_of_chi}-\frac{2+\epsilon'}{\tau^2}\le \chi'(t)\le 0, \;\;
|\chi''(t)|\le \frac{4(2+\epsilon')}{\tau^2}.
\end{gather}
\end{lemma}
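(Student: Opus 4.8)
The plan is to build $\chi$ by hand as a function of the single real variable $t$ and to extract the entire $\tau$-dependence from the scale of the transition window $[\tau/2,\tau]$. The observation driving the construction is a rigidity: $\chi$ must drop from $1$ to $0$ across an interval of length $\tau/2$, so its mean slope there is exactly $-2/\tau$, while $\chi'$ is forced to vanish at both ends of the window if $\chi$ is to glue smoothly to the constant pieces. Hence $\sup|\chi'|$ necessarily exceeds the mean value $2/\tau$, and the reciprocal window scale $\tau^{-1}$ (respectively its square $\tau^{-2}$) is exactly what must appear in the first- and second-order estimates in \eqref{est_of_chi}.

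First I would fix a model profile on a normalized window and rescale. Concretely I set $\chi(t)=\phi(2t/\tau)$ for a fixed nonincreasing $\phi:\reR\to[0,1]$ equal to $1$ on $(-\infty,1]$ and to $0$ on $[2,\infty)$; then $\chi\equiv1$ for $t\le\tau/2$ and $\chi\equiv0$ for $t\ge\tau$ hold automatically, and the chain rule gives $\chi'(t)=\tfrac{2}{\tau}\,\phi'(2t/\tau)$ and $\chi''(t)=\tfrac{4}{\tau^2}\,\phi''(2t/\tau)$. It therefore suffices to produce one model $\phi$ whose $\|\phi'\|_\infty$ and $\|\phi''\|_\infty$ are controlled by absolute constants; tracking the factors $2/\tau$ and $4/\tau^2$ then converts these into the two bounds claimed for $\chi$.

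For the model I would deliberately avoid the generic cubic smoothstep, whose derivatives are too large, and instead use a minimal-slope quadratic spline: two parabolic arcs meeting at the midpoint of the window, arranged so that $\phi'$ vanishes at both endpoints and $\phi''$ is constant on each arc. Imposing that the total descent equal $1$ pins down the common curvature of the arcs, hence the extreme value of $\phi'$ (attained at the midpoint) and of $\phi''$ (attained on each arc). An equivalent route, in keeping with the standard Kodaira-vanishing cut-off invoked above, is to mollify the piecewise-linear ramp over $[\tau/2,\tau]$ at a width proportional to $\epsilon'\tau$; the convolution keeps $|\chi'|$ near the mean slope and yields a bounded $\chi''$ concentrated near the corners.

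The main obstacle is entirely quantitative: meeting the first- and second-order bounds simultaneously with the sharp constants. There is a genuine tension, since forcing $\sup|\chi'|$ to stay within a factor $(1+O(\epsilon'))$ of the mean slope $2/\tau$ drives the $\chi'$-profile toward a rectangle, whose sharpening corners inflate $\sup|\chi''|$. The role of the auxiliary parameter $\epsilon'$ is precisely to purchase the slack needed to round those corners into a genuinely $C^2$ (or at least $C^{1,1}$) function without exceeding $4(2+\epsilon')/\tau^2$ in the second derivative. I would thus carry out the corner-rounding explicitly, fixing the smoothing scale as a definite multiple of $\epsilon'\tau$ and verifying that the resulting constants match \eqref{est_of_chi}; this single trade-off estimate is the crux, while the gluing and the verification on the flat regions are routine.
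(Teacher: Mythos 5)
You have misread the target bounds, and the quantitative core of your plan fails even under your reading. The lemma asserts $-\frac{2+\epsilon'}{\tau^2}\le\chi'\le 0$ --- a bound at scale $\tau^{-2}$ --- not, as you state, a bound at the ``reciprocal window scale'' $\tau^{-1}$. Your rescaled function $\chi(t)=\phi(2t/\tau)$ has $\sup|\chi'|=\frac{2}{\tau}\,\|\phi'\|_\infty\ge\frac{2}{\tau}$ (any function dropping from $1$ to $0$ on $[\tau/2,\tau]$ does, by the mean value theorem), and in the paper's regime $\tau>\sqrt{2e}>\frac{2+\epsilon'}{2}$ one has $\frac{2}{\tau}>\frac{2+\epsilon'}{\tau^2}$, so your $\chi$ violates (\ref{est_of_chi}). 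The same mean-value computation, $1=\int_{\tau/2}^{\tau}(-\chi')\le\frac{2+\epsilon'}{\tau^2}\cdot\frac{\tau}{2}$, shows that \emph{no} function can satisfy the lemma as printed once $\tau>\frac{2+\epsilon'}{2}$: the two requirements are consistent only if the transition window has length of order $\tau^2$, which is the natural reading here since $\chi$ is evaluated at the squared coordinate $\sigma=|z_{n+1}|^2$, and it is in that form that the cut-off is constructed in the proof of Theorem 1 of \cite{ref_yeung20} --- the one-line citation which constitutes the paper's entire proof of this lemma. A correct write-up must either work on a window of length $\sim\tau^2$ or weaken the first-order bound to scale $\tau^{-1}$; your construction, pinned to the window $[\tau/2,\tau]$, does neither, and it never detects the inconsistency.

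Independently, the step you yourself call the crux --- rounding the corners of a near-rectangular $\chi'$-profile at width $\sim\epsilon'\tau$ while keeping $|\chi''|\le\frac{4(2+\epsilon')}{\tau^2}$ --- is impossible with these constants. Suppose $\chi'$ vanishes at both ends of a window of length $\ell$, that $0\le-\chi'\le m$, and that $|\chi''|\le M_2$. Measuring $t$ from the left end, $-\chi'(t)\le\min\bigl(M_2t,\,m,\,M_2(\ell-t)\bigr)$, so integration gives
$$
1=\int_0^\ell(-\chi')\,dt\le m\Big(\ell-\frac{m}{M_2}\Big).
$$
With your targets $m=\frac{2+\epsilon'}{\tau}$, $M_2=\frac{4(2+\epsilon')}{\tau^2}$, $\ell=\frac{\tau}{2}$, the right-hand side equals $\frac{2+\epsilon'}{4}<1$, which is infeasible for $\epsilon'<2$. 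In fact, fixing $m=\frac{2+\epsilon'}{\tau}$ forces $M_2\ge\frac{2(2+\epsilon')^2}{\epsilon'\tau^2}$: keeping $\sup|\chi'|$ within a factor $1+O(\epsilon')$ of the mean slope costs a factor $\frac{1}{\epsilon'}$ in the second derivative, so $\epsilon'$ does not ``purchase'' the slack you claim. Even your fallback, the two-arc quadratic spline on $[\tau/2,\tau]$, gives exactly $\sup|\chi'|=\frac{4}{\tau}$ and $\sup|\chi''|=\frac{16}{\tau^2}$, missing both stated constants. So the proposal proves neither the lemma as stated nor a corrected variant with the constants it advertises.
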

\begin{proof}
This technical cut-off function is constructible. We refer the details to the proof of Theorem 1 in \cite{ref_yeung20}. 
\end{proof}
Let us consider the partition of unity functions associated with $\chi$:
\begin{gather*}
\rho_1(z)=\rho(z)=e^{-\chi(\sigma)}\, 
\text{ and } \,\rho_2(z)=1-\rho(z), \\
\text{where } \sigma=|z_{n+1}|^2 \Longrightarrow 0\le \rho_1,\rho_2\le 1.
\end{gather*}
By construction, it is easy to see
\begin{equation}
\frac{1}{e}\le e^{-\chi}\le 1\, \text{ and }\, 
|\chi'(\sigma)|\le \frac{(2+\epsilon')}{\tau^2},
\end{equation}
where $e$ is the Euler number. 

Now we are ready to construct a metric on $\shO_Y(E)$. Let
\begin{equation}\label{metric_h_1}
\begin{cases}
h_1=e^{-\varphi}=1+|w|^2:=1+|w_1|^2+\cdots+|w_{n-1}|^2\ge 1 \; \text{ and } \\
h_2=1.
\end{cases}
\end{equation}
Note that $h_1$ is a natural metric of $\shO(-1)$. Then, we define
\begin{equation}\label{metric_of_shO(E)}
\begin{split}
h&=\rho_1 h_1+\rho_2 h_2=\rho h_1+(1-\rho)h_2 \\
&=\rho (h_1-1)+1=\rho(e^{-\varphi}-1)+1.
\end{split}
\end{equation}
This metric is well defined. Since on $U'\subset U$, $\shO(E)|_{U'}$ is trivial. Thus, the transition function is $1$, which allows us to manipulate the metric freely without worrying about the transition laws. Note that the curvature induced by $h$ has signs:
$$
\cxi\curv(\shO_{Y,x}(E))=
\begin{cases}
0 & \text{on $Y_x-P_{2\epsilon}$}, \\
\text{bounded} & \text{on $P_{2\epsilon}-P_\epsilon$}, \\
\le 0 & \text{on $P_{\epsilon}$ ($=0$ alog radius direction)}, \\
<0 & \text{on $E$}.
\end{cases}
$$

%...................................................................................................

\subsubsection{Curvature of $\shO_Y(E)$}
Let us further explore the curvature of $\shO_Y(E)$ with respect to the metric $h$ defined in (\ref{metric_of_shO(E)}). We have
\begin{align*}
&\p_k\p_{\bar\ell}\log h \\
&=\frac{(\rho(e^{-\varphi}-1)+1) \Big(\p_k\p_{\bar\ell}\rho \,(e^{-\varphi}-1)+
\p_{\bar\ell}\rho \,\p_k e^{-\varphi}+\p_k\rho \p_{\bar\ell}e^{-\varphi}+
\rho\p_k\p_{\bar\ell}e^{-\varphi} \Big)}
{(\rho(e^{-\varphi}-1)+1)^2} \\
&\makebox[12pt]{}
-\frac{(\p_{\bar\ell}\rho(e^{-\varphi}-1)+\rho\p_{\bar\ell}e^{-\varphi})
(\p_k\rho(e^{-\varphi}-1)+\rho\p_k e^{-\varphi})}
{(\rho(e^{-\varphi}-1)+1)^2} \\
&=\frac{1}{(\rho(e^{-\varphi}-1)+1)^2} \Big(
\underbrace{(\rho(e^{-\varphi}-1)+1) 
(\p_k\p_{\bar\ell}\rho (e^{-\varphi}-1)+
\rho\p_k\p_{\bar\ell}e^{-\varphi})}_{(a)} \\
&\makebox[94pt]{}
+\underbrace{(\rho(e^{-\varphi}-1)+1)
(\p_{\bar\ell}\rho\, \p_k e^{-\varphi}+\p_k\rho \p_{\bar\ell}e^{-\varphi})
}_{(b)} \\
&\makebox[94pt]{}
-\underbrace{(\p_{\bar\ell}\rho(e^{-\varphi}-1)
+\rho\p_{\bar\ell}e^{-\varphi})
(\p_k\rho(e^{-\varphi}-1)+\rho\p_k e^{-\varphi})}_{(c)}
\Big),
\end{align*}
where $(b)-(c)$ is:
\begin{align*}
\p_{\bar\ell}\rho \,\p_k e^{-\varphi}+\p_k\rho\p_{\bar\ell}e^{-\varphi}
-\p_{\bar\ell}\rho \,\p_k\rho(e^{-\varphi}-1)^2-\rho^2\p_{\bar\ell}e^{-\varphi}
\p_k e^{-\varphi}.
\end{align*}
Therefore, $\p_k\p_{\bar\ell}\log h$ is
\begin{equation}\label{eq_ddbar_h}
\begin{split}
&\frac{1}{\rho(e^{-\varphi}-1)+1}
(\p_k\p_{\bar\ell}\rho (e^{-\varphi}-1)+
\rho\p_k\p_{\bar\ell}e^{-\varphi}) \\
&+\frac{1}{(\rho(e^{-\varphi}-1)+1)^2}
(\p_{\bar\ell}\rho \,\p_k e^{-\varphi}+\p_k\rho \,\p_{\bar\ell}e^{-\varphi}
-\p_{\bar\ell}\rho \,\p_k\rho(e^{-\varphi}-1)^2-\rho^2\p_{\bar\ell}e^{-\varphi}
\p_k e^{-\varphi}).
\end{split}
\end{equation}
Recalling that
$$
-\p_k\p_{\bar\ell}\log\rho=-\frac{\p_k\p_{\bar\ell}\rho}{\rho}
+\frac{\p_{\bar\ell}\rho\, \p_k\rho}{\rho^2},
$$
we aim to identify such shapes in (\ref{eq_ddbar_h}). Matching up the terms and introducing a tangent vector $v$,  we have
\begin{equation}\label{ddbar_log_h_v}
\begin{split}
-\p_k\p_{\bar\ell}\log h\, v^k\bar v^\ell
=&-\frac{\p_k\p_{\bar\ell}\rho}{\rho}v^k\bar v^\ell A
+\frac{\p_k\rho\,\p_{\bar\ell}\rho}{\rho^2}v^k\bar v^\ell A^2 \\
&-\frac{\p_k\p_{\bar\ell}e^{-\varphi}}{e^{-\varphi}}v^k\bar v^\ell B
+\frac{\p_ke^{-\varphi}\,\p_{\bar\ell}e^{-\varphi}}
{(e^{-\varphi})^2}v^k\bar v^\ell B^2 \\
&-\frac{1}{(\rho(e^{-\varphi}-1)+1)^2}
(\p_k\rho \p_{\bar\ell}e^{-\varphi}+\p_{\bar\ell}\rho\p_k e^{-\varphi})
v^k\bar v^\ell,
\end{split}
\end{equation}
where
$$
A=\frac{\rho (e^{-\varphi}-1)}{\rho(e^{-\varphi}-1)+1},\;
B=\frac{\rho e^{-\varphi}}{\rho(e^{-\varphi}-1)+1}.
$$
By (\ref{metric_h_1}), it is easy to see
$$
A\le 1,\;\text{ and }\;
B=\frac{\rho e^{-\varphi}}{\rho e^{-\varphi}+(1-\rho)}
%\le\frac{\rho(1+|w|^2)}{\rho e^{-\varphi}+(1-\rho)}
\le 1.
$$
Let us further investigate the term $\frac{\p_k\rho\,\p_{\bar\ell}\rho}{\rho^2}v^k\bar v^\ell$ in (\ref{ddbar_log_h_v}). Compute
$$
\frac{\p_k\rho\,\p_{\bar\ell}\rho}{\rho^2}v^k\bar v^\ell
=\p_k\chi v^k\cdot \p_{\bar\ell}\chi \bar v^\ell=|\p_k\chi v^k|^2\ge 0,
$$
which implies
\begin{align}
\notag -\p_k\p_{\bar\ell}\log h\, v^k\bar v^\ell
\le&\Big( -\frac{\p_k\p_{\bar\ell}\rho}{\rho}
+\frac{\p_k\rho\,\p_{\bar\ell}\rho}{\rho^2} \Big)v^k\bar v^\ell A \\
\notag &+\Big( -\frac{\p_k\p_{\bar\ell}e^{-\varphi}}{e^{-\varphi}}
+\frac{\p_ke^{-\varphi}\,\p_{\bar\ell}e^{-\varphi}}
{(e^{-\varphi})^2} \Big)v^k\bar v^\ell B \\
\notag &-\frac{1}{(\rho(e^{-\varphi}-1)+1)^2}
(\p_k\rho \p_{\bar\ell}e^{-\varphi}+\p_{\bar\ell}\rho\p_k e^{-\varphi})
v^k\bar v^\ell \\
\label{ddbar_log_h_v_ineq} \le& \p_k\p_{\bar\ell}\chi\, v^k\bar v^\ell+
\p_k\p_{\bar\ell}\varphi\, v^k\bar v^\ell+
\frac{2}{(\rho e^{-\varphi})^2}
|\p_k\rho \p_{\bar\ell}e^{-\varphi}v^k \bar v^\ell|.
\end{align}
\begin{itemize}
\item Regarding the first term in (\ref{ddbar_log_h_v_ineq}), we will take care of it by multiplying $e^{-\chi}$ to the metric of $M$. (cf. (\ref{eq_metric_of_M}))

\item Regarding the second term in (\ref{ddbar_log_h_v_ineq}), recall $\rho=e^{-\chi(\sigma)}$ and compute
\begin{equation}\label{pos_part_of_ddbar_log_h_v}
\begin{split}
\frac{2}{(\rho e^{-\varphi})^2}
|\p_k\rho\, \p_{\bar\ell}e^{-\varphi}v^k \bar v^\ell|
&=\frac{2}{\rho e^{-\varphi}}
|\p_k\chi v^k \, \p_{\bar\ell}\varphi \bar v^\ell| 
=\frac{|\chi'|}{\rho e^{-\varphi}}\cdot
2|\p_k\sigma v^k, \p_{\bar\ell}\varphi \bar v^\ell| \\
&\le \frac{e |\chi'|}{e^{-\varphi}}
(\underbrace{|\p_k\sigma v^k|^2}_{(d)}+ 
\underbrace{|\p_{\bar\ell}\varphi \bar v^\ell|^2)}_{(e)},
\end{split}
\end{equation}
where $\sigma=|z_{n+1}|^2$. Note that here we estimate $1/\rho=e^{\chi}\le e$ because $0\le \chi\le 1$.

\end{itemize}

%...................................................................................................

\subsubsection{Estimates of (d) and (e) in (\ref{pos_part_of_ddbar_log_h_v})}\label{subsec_est_of_(d)&(e)}
Regarding term (e), we aim to combine the estimate of term $(e)=\frac{e|\chi'|}{e^{-\varphi}}|\p_{\bar\ell}\varphi \bar v^\ell|^2$ and the negativity of $\p_k\p_{\bar\ell}\varphi\, v^k\bar v^\ell$ in (\ref{ddbar_log_h_v_ineq}). By using the explicit expression $\varphi=-\log(1+|w|^2)$ and $|w|^2=\sum_k|w_k|^2$, we can compare these two terms. Recall 
$$
-\p_k\p_{\bar\ell}\log (1+|w|^2)=
-\frac{(1+|w|^2)\delta_{k\ell}-w_\ell\bar w_k}{(1+|w|^2)^2},
$$
which implies
\begin{align*}
\p_k\p_{\bar\ell}\varphi v^k\bar v^\ell&=
\frac{-(1+|w|^2)|v|^2+w_\ell\bar w_k v^k\bar v^\ell}{(1+|w|^2)^2} \\
&=\frac{-|v|^2}{(1+|w|^2)^2}-\frac{|w|^2|v|^2}{(1+|w|^2)^2}
+\frac{|w_\ell\bar v^\ell|^2}{(1+|w|^2)^2} \\
&\le \frac{-|v|^2}{(1+|w|^2)^2}.
\end{align*}
On the other hand,
\begin{align*}
\frac{1}{e^{-\varphi}}|\p_{\bar\ell}\varphi \bar v^\ell|^2
=\frac{1}{1+|w|^2} \Big|\frac{w_\ell \bar v^\ell}{1+|w|^2}\Big|^2
\le \frac{1}{(1+|w|^2)^3}|w|^2 |v|^2
\le \frac{|v|^2}{(1+|w|^2)^2}\le |v|^2. 
\end{align*}
Thus, by using (\ref{est_of_chi}), we require the numerical condition 
$$
e|\chi'|\le e\cdot \frac{2+\epsilon'}{\tau^2}\le 1
\Longleftrightarrow \tau \ge \sqrt{e(2+\epsilon')}.
$$

If this is not the case, namely, the injectivity radius is small, then we need the positivity of $M$ to take over the positivity of $|\p_{\bar\ell}\varphi \bar v^\ell|^2$. Again, by choosing the normal coordinates plus $\sqrt{-1}\curv(M)\ge c\omega$, we have $\sqrt{-1}\curv(M)_{k\bar\ell}v^k\bar v^\ell\ge c|v|^2$.
Thus, we require the numerical condition
$$
qc \ge (n+\epsilon)e\frac{(2+\epsilon')}{\tau^2},
$$
which corresponds to the $1/\tau^2$ term in condition (b) in Theorem \ref{thm_main}.

Let us examine term (d). By introducing the estimate of term (e), we obtain the estimate:
\begin{equation}\label{final_est}
\begin{split}
-\p_k\p_{\bar\ell}\log h\, v^k\bar v^\ell
&\le \p_k\p_{\bar\ell}\chi\, v^k\bar v^\ell+
\frac{e |\chi'|}{e^{-\varphi}} |\p_k\sigma v^k|^2 \\
&= \p_k\p_{\bar\ell}\chi\, v^k\bar v^\ell
+\frac{e}{1+|w|^2} |\chi'| \cdot |(\p_k z_{n+1}) v^k|^2 \\
&\le \p_k\p_{\bar\ell}\chi\, v^k\bar v^\ell
+e\frac{(2+\epsilon')}{\tau^2}\cdot\tau |v|^2.
\end{split}
\end{equation}
Recall (\ref{curv_cond_for_N_0}), we require
\begin{align*}
q\cxi\curv(\alpha^*M\etens M)\ge (n+\epsilon)\cxi\curv(E),
\end{align*}
namely, we need
$$
qc \ge (n+\epsilon)\cdot e\frac{(2+\epsilon')}{\tau},
$$
which is the condition (a) in Theorem \ref{thm_main} in the case of $p=0$.

%...................................................................................................

\subsubsection{Extension Theorem}
Let $e^{-\varphi_M}$ be the smooth metric of $M$ such that $\p_k\p_{\bar\ell}\varphi_M v^k\bar v^\ell \ge c |v|^2$, and equip $M$ with the metric 
\begin{equation}\label{eq_metric_of_M}
e^{-(\varphi_M+\frac{\chi}{q})}.
\end{equation}
Let $L=K_X+qM$. By the construction,
$$
\p_k\p_{\bar\ell}(q\varphi_M+\chi)v^k\bar v^\ell
\ge (n+\epsilon)e\frac{(2+\epsilon')}{\tau} |v|^2
+\p_k\p_{\bar\ell}\chi v^k\bar v^\ell
\ge -\p_k\p_{\bar\ell}\log h\, v^k\bar v^\ell.
$$
Then, a section $f\in H^0(D,(K_X\tens qM)\etens(K_X\tens qM)^{\tens(b+1)})$ on the diagonal $D$ satisfying the $L^2$ condition
$$
\int_{D}\|f\|^2 \d V_{D}=
\int_{U} |f|^2 e^{-q(b+2)(\varphi_M+\frac{\chi}{q})}\, \d V_{U} <\infty.
$$
can be extended to $\tilde{f}$ on $\Omega\times\Omega$ with $L^2$ estimates, and then $\tilde{f}$ can be extended to $X\times X$ and be a section in $H^0(X\times X,L\etens L^{\tens(b+1)})$. Since the metric $e^{-(\varphi_M+\chi)}$ is smooth, every section in $H^0(D,L\etens L^{\tens(b+1)})$ is extendible. Thus, the map $\iota$ is surjective as desired and $K_X+qM$ has Property $N_0$. In particular, if $M=K_X, q=1$, for enough high tower $X_s$, the injectivity radius will be sufficient large. Hence, $2K_{X_s}$ will enjoy Property $N_0$ for $s\gg 0$.

%...................................................................................................

\subsection{$p=1$ case}
By Lemma \ref{lem_cohom_N_p}, Property $N_1$ is equivalent to $H^1(X,\tens^a M_L\tens L^{\tens b})=0$ for $2\ge a\ge 0$, $b\ge 1$ which implies the original cohomological condition $H^1(X,\bigwedge^a M_L\tens L^{\tens b})=0$ for $2\ge a\ge 0$, $b\ge 1$.

\subsubsection{Setting for Applying Extension Theorems}
We consider the exact sequence
$$
\xymatrix{
0 \ar[r] & M_L^{\tens 2}\tens L^{\tens(b+1)} \ar[r] & 
V\tens M_L\tens L^{\tens(b+1)} \ar[r]^-\iota & 
M_L \tens L^{\tens(b+2)} \ar[r] & 0
},
$$
and its induced long exact sequence
$$
\xymatrix{
0 \ar[r] & H^0(M_L^{\tens 2}\tens L^{\tens (b+1)}) \ar[r] & 
V\tens H^0(M_L\tens L^{\tens(b+1)}) \ar[r]^-\iota & 
H^0(M_L\tens L^{\tens(b+2)}) & \\
\ar[r] & H^1(M_L^{\tens 2}\tens L^{\tens(b+1)}) \ar[r] & H^1(M_L\tens L^{\tens(b+1)})=0. & &
}
$$
Note that the last vanishing $H^1(M_L\tens L^{\tens(b+1)})=0$ is by the previous step, i.e. Lemma \ref{lem_ind_base_case} (b). Our aim is to show $H^1(M_L^{\tens 2}\tens L^{\tens(b+1)})=0$. Similar to the proof of Lemma \ref{lem_ind_base_case}, it is sufficient to show the following lemma.
\begin{lemma}[Lemma 1.3, \cite{ref_inamodar97}] \label{lem_ind_p=2_case}
Assume that $H^1(M_L\tens L^{\tens(b+1)})=0$. Let $\Sigma^{(3)}=D_{1,3}\cup D_{2,3}$, where $D_{1,i}$ is the the diagonal embedding of $X$ in $X_1\times X_i\subset X\times X\times X$. Denote $\idI_{\Sigma^{(3)}}$ the ideal sheaf of $\Sigma^{(3)}$. Then
\begin{enumerate}
\item $H^0(X,M_L^{\tens 2}\tens L^{\tens(b+1)})=H^0(X^3,\idI_{\Sigma^{(3)}}\tens L\etens L\etens L^{\tens(b+1)}$),

\item $H^1(X^3,\idI_{\Sigma^{(3)}}\tens L\etens L\etens L^{\tens(b+1)})=0\Rightarrow H^1(X,M_L^{\tens 2}\tens L^{\tens(b+1)})=0$,
\end{enumerate}
for every $b\ge 1$.
\end{lemma}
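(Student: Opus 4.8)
The plan is to follow the template of Lemma~\ref{lem_ind_base_case} one level up, replacing the pair $(M_L,X^2)$ by $(M_L^{\tens 2},X^3)$. From the long exact sequence already displayed above, induced by the short exact sequence
$$
0\to M_L^{\tens 2}\tens L^{\tens(b+1)}\to V\tens M_L\tens L^{\tens(b+1)}
\xrightarrow{\iota} M_L\tens L^{\tens(b+2)}\to 0,
$$
together with the hypothesis $H^1(M_L\tens L^{\tens(b+1)})=0$, one reads off
$H^0(X,M_L^{\tens 2}\tens L^{\tens(b+1)})=\Ker\iota$ and
$H^1(X,M_L^{\tens 2}\tens L^{\tens(b+1)})=\operatorname{coker}\iota$.
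Hence both assertions follow once I produce, on $X^3$, a long exact cohomology sequence whose relevant connecting map is identified with $\iota$ and whose outer terms are $H^\bullet(X^3,\idI_{\Sigma^{(3)}}\tens\shL)$, where I abbreviate $\shL:=L\etens L\etens L^{\tens(b+1)}$.

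The geometric input I would use is that $D_{1,3}$ and $D_{2,3}$ are smooth subvarieties of $X^3$, each of codimension $n$, meeting transversally along the small diagonal $\delta=D_{1,2,3}$ of codimension $2n$. Transversality gives $\idI_{D_{1,3}}+\idI_{D_{2,3}}=\idI_{\delta}$, so the restriction of $\idI_{D_{2,3}}$ to $D_{1,3}$ is the ideal sheaf of $\delta$ inside $D_{1,3}$; consequently the inclusion $\idI_{\Sigma^{(3)}}=\idI_{D_{1,3}}\cap\idI_{D_{2,3}}\hookrightarrow\idI_{D_{2,3}}$ has cokernel $\idI_{\delta,D_{1,3}}$, yielding the short exact sequence on $X^3$
$$
0\to \idI_{\Sigma^{(3)}}\tens\shL\to \idI_{D_{2,3}}\tens\shL
\to \idI_{\delta,D_{1,3}}\tens\shL|_{D_{1,3}}\to 0 .
$$
Identifying $D_{1,3}\isom X^2$ through its second and third factors, a direct computation gives $\shL|_{D_{1,3}}\isom L\etens L^{\tens(b+2)}$ and $\idI_{\delta,D_{1,3}}\isom\idI_{D}$, the ideal sheaf of the diagonal of this $X^2$.

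Next I would compute the cohomology of the two right-hand terms using the K\"unneth formula together with the base case. Splitting $X^3=X_1\times X^2_{\{2,3\}}$, the middle sheaf is $q_1^*L\tens(\idI_{D}\tens L\etens L^{\tens(b+1)})$, so K\"unneth and Lemma~\ref{lem_ind_base_case}(a) give $H^0(X^3,\idI_{D_{2,3}}\tens\shL)\isom V\tens H^0(M_L\tens L^{\tens(b+1)})$; likewise Lemma~\ref{lem_ind_base_case}(a), applied with $b+1$ in place of $b$, identifies the cohomology of the restriction term with $H^0(M_L\tens L^{\tens(b+2)})$. Under these identifications the restriction map is exactly $\iota$: restricting to $\{x_1=x_3\}$ multiplies the $V$-factor into the $L^{\tens(b+1)}$ carried by the third factor, which is precisely the description of $\iota$. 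The long exact sequence of the displayed short exact sequence then reads
$$
0\to H^0(X^3,\idI_{\Sigma^{(3)}}\tens\shL)\to V\tens H^0(M_L\tens L^{\tens(b+1)})
\xrightarrow{\iota} H^0(M_L\tens L^{\tens(b+2)})\to H^1(X^3,\idI_{\Sigma^{(3)}}\tens\shL)\to\cdots .
$$
Comparing with the sequence of the first paragraph gives $H^0(X^3,\idI_{\Sigma^{(3)}}\tens\shL)=\Ker\iota=H^0(X,M_L^{\tens 2}\tens L^{\tens(b+1)})$, which is (a), while $\operatorname{coker}\iota$ injects into $H^1(X^3,\idI_{\Sigma^{(3)}}\tens\shL)$; hence the vanishing of the latter forces $H^1(X,M_L^{\tens 2}\tens L^{\tens(b+1)})=\operatorname{coker}\iota=0$, which is (b).

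The main obstacle is the correctness of the short exact sequence in the second paragraph, namely that restricting $\idI_{D_{2,3}}$ to $D_{1,3}$ yields precisely the ideal sheaf of $\delta$ with no spurious torsion. This is exactly where transversality of the two diagonals is essential; I would verify it either through $\mathcal{T}or$-independence of $\shO_{D_{1,3}}$ and $\shO_{D_{2,3}}$ (both smooth and meeting in the expected codimension $2n$), or locally in the flat coordinates on a product of three small balls as in Section~\ref{subsubsec_setting_for_applying_ext_thm}, where the two diagonals become transverse linear subspaces. A secondary point requiring care is the naturality that identifies the restriction map with $\iota$, which is routine but must be made compatible with the K\"unneth and base-case isomorphisms.
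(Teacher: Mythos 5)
Your proof is correct and follows essentially the same route the paper takes: the paper cites this statement as Lemma 1.3 of \cite{ref_inamodar97} and sketches precisely your identifications --- the restriction sequence from $\idI_{D_{2,3}}$ to $D_{1,3}$, the K\"unneth decomposition, and the base-case Lemma \ref{lem_ind_base_case} --- deferring the remaining details to the cited source. Your write-up supplies exactly those deferred details (the transversality/second-isomorphism-theorem argument giving the short exact sequence on $X^3$, and the identification of the restriction map with $\iota$), and they check out.
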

The proof is similar to the proof of lemma \ref{lem_ind_base_case}. Instead of showing $H^1(X^3,\idI_{\Sigma^{(3)}}\tens L\etens L\etens L^{\tens(b+1)})=0$ directly, we proceed the proof by using extension theorem. Recall $V=H^0(X,L)$ and the canonical isomorphisms:
\begin{gather*}
\begin{align*}
V\tens H^0(X,M_L\tens L^{\tens(b+1)})
&\isom V\tens H^0(X^2,\idI_D\tens L\etens L^{\tens (b+1)}) \\
&\isom H^0(X^3,\idI_{D_{2,3}}\tens L\etens L\etens L^{\tens(b+1)})
\end{align*}
\\
\begin{align*}
H^0(X,M_L\tens L^{\tens(b+2)})
&\isom H^0(X^2,\idI_{D}\tens L\etens L^{\tens(b+2)}) \\
&\isom H^0(D_{1,3},\idI_{D_{2,3}}\tens L\etens L^{\tens (b+1)}),
\end{align*}
\end{gather*}
where $D_{i,j}\isom X^2\subset X\times X\times X$ is the partial diagonal embedding defined by $D_{i,j}=\{(x_1,x_2,x_3)\in X^3\mid x_i=x_j\}$. Then, the vanishing of $H^1(X^3,\idI_{\Sigma^{(3)}}\tens L\etens L\etens L^{\tens(b+1)})$ is equivalent to the surjectivity of the restriction map
\begin{equation}\label{ext_problem_of_p=1}
H^0(X^3,\idI_{D_{2,3}}\tens L\etens L\etens L^{\tens(b+1)})\mapto
H^0(D_{1,3},\idI_{D_{2,3}}\tens L\etens L^{\tens (b+2)}).
\end{equation}

\subsubsection{Extension Theorem}
In order to apply the Ohsawa-Takegoshi theorem (\cite{ref_ohsawa04} Theorem 1.1), we increase the dimension of the extension center by blowup. Let $\alpha :Y=Bl_{D_{1,3}} X^3\mapto X^3$ be the blowup, and we have the following diagram:
$$
\xymatrixcolsep{0.3pc}\xymatrix{
E \ar[d] & \subset & Y\ar[d]^-\alpha \\
D_{1,3} & \subset & X^3
}
$$
where $E$ is the exceptional divisor. Then, we turn to consider the extension problem on $Y$:
$$
\xymatrix{
H^0(Y,\alpha^* \idI_{D_{2,3}}\tens L\etens L\etens L^{\tens(b+1)}) 
\ar[r]\ar[d]^\parallel &
H^0(E,\alpha^* \idI_{D_{2,3}}\tens L\etens L^{\tens (b+2)})
\ar[d]^\parallel \\
H^0(X^3,\idI_{D_{2,3}}\tens L\etens L\etens L^{\tens(b+1)}) \ar[r] &
H^0(D_{1,3},\idI_{D_{2,3}}\tens L\etens L^{\tens (b+2)})
}.
$$
Note that the multiplier ideal sheaf $\idI_{D_{2,3}}$ associated to a singular weight function which takes $\infty$ along $D_{2,3}$. By the same justification as the $p=0$ case, we calculate the curvature conditions of the bundles $L\etens L\etens L^{\tens(b+1)}$, and require
$$
qc \ge (2n+\epsilon)\cdot e\frac{(2+\epsilon')}{\tau},
$$
to obtain the desired curvature estimate:
\begin{equation*}
q\cxi\curv(\alpha^*K_{X_s}\etens K_{X_s} \etens K_{X_s})
\ge (n+\epsilon)\cxi\curv(E).
\end{equation*}
The coefficient $2$ of $n$ is coming from the blow up formula 
\begin{equation*}
\Ric_Y=-K_Y=-(\alpha^*K_{X\times X\times X}+({\rm codim}\,D_{1,3}-1)E)
\end{equation*}
and $\codim D_{1,3}=3n-n=2n$. Thus, the morphism in (\ref{ext_problem_of_p=1}) is surjective and $K_X+qM$ has Property $N_1$. In particular, if $M=K_X, q=1$, for enough high tower $X_s$, the injectivity radius will be sufficient large. Hence, $2K_{X_s}$ will enjoy Property $N_1$ for $s\gg 0$.

\subsection{General case of $p$} 
Following the previous arguments in the $p=0$ and $p=1$ cases, we proceed the mathematical induction on $p$. Similar to Lemma \ref{lem_ind_base_case} and Lemma \ref{lem_ind_p=2_case}, we assume the following statements.

\smallskip

\noindent\textbf{Inductive Hypothesis.} 
Let the partial diagonal embedding of $X$ be
$$D_{i,j}=\{(x_1,\cdots,x_n)\in X^n \mid x_i=x_j\},$$
and denote $\Sigma^{(p)}=D_{1,2}\cup D_{2,3}\cup\cdots\cup D_{p-1,p}$. Then
\begin{enumerate}
\item $H^0(X,M_L^{\tens (p-1)}\tens L^{\tens(b+1)})=H^0(X^{p},\idI_{\Sigma^{(p)}}\tens L\etens \cdots \etens L\etens L^{\tens(b+1)}$),

\item $H^1(X^{p},\idI_{\Sigma^{(p)}}\tens L\etens \cdots \etens L\etens L^{\tens(b+1)})=0\Rightarrow H^1(X,M_L^{\tens (p-1)}\tens L^{\tens(b+1)})=0$,
\end{enumerate}
for every $b\ge 1$. We take the singular weight function of $\idI_{\Sigma^{(p)}}$ as follows. Locally, $\idI_{\Sigma^{(p)}}$ is choosen as
$$e^{-\varphi}=e^{-\varphi_{1,p}}\cdots e^{-\varphi_{p-1,p}},$$
where $e^{-\varphi_{i,p}}=\frac{1}{|f_{i,p}|^2}$ where $f_{i,p}$ is the local defining equation of $D_{i,p}$. By the construction, $\varphi$ is a plurisubharmonic function. Then, we can prove the general lemma by using extension theorem.

\begin{lemma}[Lemma 1.5, \cite{ref_inamodar97}]
Assume that $L$ has Property $N_{p-1}$. Denote $\idI_{\Sigma^{(p)}}$ the ideal sheaf of $\Sigma^{(p)}$. Then
\begin{enumerate}
\item $H^0(X,M_L^{\tens p}\tens L^{\tens(b+1)})=H^0(X^{p+1},\idI_{\Sigma^{(p+1)}}\tens L\etens \cdots \etens L\etens L^{\tens(b+1)}$).

\item $H^1(X^{p+1},\idI_{\Sigma^{(p+1)}}\tens L\etens \cdots \etens L\etens L^{\tens(b+1)})=0\Rightarrow H^1(X,M_L^{\tens p}\tens L^{\tens(b+1)})=0$.
\end{enumerate}
\end{lemma}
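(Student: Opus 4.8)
The plan is to run the same reduction used for $p=0$ (Lemma~\ref{lem_ind_base_case}) and $p=1$ (Lemma~\ref{lem_ind_p=2_case}), now as the general inductive step. First I would tensor the defining sequence $0\to M_L\to V\tens\shO_X\to L\to 0$, with $V=H^0(X,L)$, by the locally free sheaf $M_L^{\tens(p-1)}\tens L^{\tens(b+1)}$ (locally free since $L$ is globally generated), giving
\[
0\to M_L^{\tens p}\tens L^{\tens(b+1)}\to V\tens M_L^{\tens(p-1)}\tens L^{\tens(b+1)}\xrightarrow{\iota} M_L^{\tens(p-1)}\tens L^{\tens(b+2)}\to 0.
\]
Passing to cohomology, and using that the previous stage of the induction provides $H^1(X,M_L^{\tens(p-1)}\tens L^{\tens(b+1)})=0$ (so the rightmost term $V\tens H^1(M_L^{\tens(p-1)}\tens L^{\tens(b+1)})$ vanishes), one obtains $H^0(X,M_L^{\tens p}\tens L^{\tens(b+1)})=\ker\iota$ and $H^1(X,M_L^{\tens p}\tens L^{\tens(b+1)})=\operatorname{coker}\iota$. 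Thus both assertions reduce to understanding $\iota$ on a product of copies of $X$.

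Second, I would transport each term of $\iota$ to $X^{p+1}$. By the inductive hypothesis part (1) and the K\"unneth formula, adjoining one new copy of $X$ carrying the factor $L$ coming from $V$ yields
\[
V\tens H^0(M_L^{\tens(p-1)}\tens L^{\tens(b+1)})\isom H^0\big(X^{p+1},\idI_{\Sigma^{(p)}}\tens L\etens\cdots\etens L\etens L^{\tens(b+1)}\big),
\]
where $\Sigma^{(p)}$ now sits in the $p$ old factors. Let $D$ denote the new partial diagonal joining the new copy to the copy carrying $L^{\tens(b+1)}$, so that $\Sigma^{(p+1)}=\Sigma^{(p)}\cup D$; under $D\isom X^p$ the factors $L$ and $L^{\tens(b+1)}$ on the two identified copies merge to $L^{\tens(b+2)}$, and applying the inductive hypothesis part (1) with $b$ replaced by $b+1$ identifies $H^0(D,\idI_{\Sigma^{(p)}}\tens\cdots)$ with $H^0(X,M_L^{\tens(p-1)}\tens L^{\tens(b+2)})$. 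With these identifications $\iota$ becomes exactly the restriction map to $D$.

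Third, on $X^{p+1}$ I would invoke the ideal-sheaf sequence
\[
0\to\idI_{\Sigma^{(p+1)}}\tens\mathcal{M}\to\idI_{\Sigma^{(p)}}\tens\mathcal{M}\xrightarrow{\mathrm{res}}(\idI_{\Sigma^{(p)}}\tens\mathcal{M})|_{D}\to 0,\qquad \mathcal{M}=L\etens\cdots\etens L\etens L^{\tens(b+1)},
\]
coming from $\Sigma^{(p+1)}=\Sigma^{(p)}\cup D$. Its long exact sequence gives $H^0(X^{p+1},\idI_{\Sigma^{(p+1)}}\tens\mathcal{M})=\ker(\mathrm{res})$ and shows that $H^1(X^{p+1},\idI_{\Sigma^{(p+1)}}\tens\mathcal{M})=0$ forces $\mathrm{res}$ to be surjective. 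Since $\iota=\mathrm{res}$, I conclude (1) $H^0(X,M_L^{\tens p}\tens L^{\tens(b+1)})=\ker\iota=\ker(\mathrm{res})=H^0(X^{p+1},\idI_{\Sigma^{(p+1)}}\tens\mathcal{M})$, and (2) the vanishing of the product $H^1$ makes $\mathrm{res}=\iota$ onto, whence $\operatorname{coker}\iota=0$ and $H^1(X,M_L^{\tens p}\tens L^{\tens(b+1)})=0$.

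The main obstacle is the correctness and exactness of the ideal-sheaf sequence in the third step: one must check that the newly adjoined diagonal $D$ is \emph{not} a component of $\Sigma^{(p)}$ and meets it properly, so that $\idI_{\Sigma^{(p+1)}}=\idI_{\Sigma^{(p)}}\cap\idI_{D}$ and the quotient $\idI_{\Sigma^{(p)}}/\idI_{\Sigma^{(p+1)}}$ is the honest restriction $(\idI_{\Sigma^{(p)}})|_{D}=\idI_{\Sigma^{(p)}}\tens\shO_{D}$, carrying no embedded torsion and agreeing, under $D\isom X^p$, with the ideal sheaf $\idI_{\Sigma^{(p)}}$ of the inductive hypothesis. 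This transversality of the partial diagonals, together with verifying that the K\"unneth identifications intertwine $\iota$ with $\mathrm{res}$, is the delicate bookkeeping. I emphasize that no curvature estimate or Ohsawa--Takegoshi input is needed for this lemma; those enter only afterward, to verify the hypothesis $H^1(X^{p+1},\idI_{\Sigma^{(p+1)}}\tens\mathcal{M})=0$.
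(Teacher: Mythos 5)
Your proposal is correct, and it is the right reconstruction of a lemma the paper itself does not actually prove: the paper quotes it as Lemma 1.5 of \cite{ref_inamodar97}, writes down only the two chains of natural isomorphisms (your second step, essentially verbatim), and then passes directly to the Ohsawa--Takegoshi extension theorem, thereby conflating the formal lemma with the verification of its hypothesis. Your three steps --- tensoring $0\to M_L\to V\tens\shO_X\to L\to 0$ by $M_L^{\tens(p-1)}\tens L^{\tens(b+1)}$, transporting $\iota$ to $X^{p+1}$ via K\"unneth and the inductive hypothesis, and reading off both claims from the ideal-sheaf sequence of $\Sigma^{(p+1)}=\Sigma^{(p)}\cup D$ --- are exactly the argument behind the cited lemma, and they match what the paper does explicitly in its $p=0$ and $p=1$ cases (where the tensored Euler sequence and its long exact sequence are written out) plus the ideal-sheaf step the paper leaves implicit. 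Your closing remark is also the correct separation of concerns: no curvature estimate enters the lemma itself; in the paper the extension theorem serves only to make the restriction map surjective, which is what the hypothesis $H^1(X^{p+1},\idI_{\Sigma^{(p+1)}}\tens\mathcal{M})=0$ would yield through your sequence. The transversality point you flag is genuine but easily discharged: distinct partial diagonals are smooth and meet in the expected codimension (e.g.\ $D_{i,p+1}\cap D_{j,p+1}$ is the locus $x_i=x_j=x_{p+1}$, of codimension $2n$), so Tor-independence holds and the quotient sheaf is the honest restriction.

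One caveat worth recording: the hypothesis as stated, ``$L$ has Property $N_{p-1}$,'' only gives vanishing of $H^1$ of the \emph{wedge} powers $\bigwedge^{a}M_L\tens L^{b}$ (Lemma \ref{lem_cohom_N_p}), which does not formally imply the \emph{tensor}-power vanishing $H^1(X,M_L^{\tens(p-1)}\tens L^{\tens(b+1)})=0$ that your first step (and any proof of this lemma) requires. You sidestep this correctly by invoking the previous stage of the induction, which in the paper's scheme does deliver the tensor-power vanishing (Inductive Hypothesis (b) combined with the product vanishing already established); this is also how the paper's own $p=1$ lemma phrases its hypothesis, namely as $H^1(M_L\tens L^{\tens(b+1)})=0$ rather than as Property $N_0$. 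So your reading is the one under which the induction actually closes.
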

Recall $V=H^0(X,L)$ and the natural isomorphisms:
\begin{gather*}
\begin{align*}
V\tens H^0(X,M_L^{\tens (p-1)}\tens L^{\tens(b+1)})
&\isom V\tens H^0(X^p,\idI_{\Sigma^{(p)}}
\tens L\etens\cdots\etens L^{\tens (b+1)}) \\
&\isom H^0(X^{p+1},\idI_{\Sigma^{(p)}}
\tens L\etens\cdots\etens L\etens L^{\tens(b+1)})
\end{align*}
\\
\begin{align*}
H^0(X,M_L^{\tens(p-1)}\tens L^{\tens(b+2)})
&\isom H^0(X^p,\idI_{\Sigma^{(p)}}
\tens L\etens\cdots\etens L^{\tens (b+2)}) \\
&\isom H^0(D_{1,p},\idI_{\Sigma^{(p)}}\tens 
L\etens\cdots\etens L^{\tens (b+1)}),
\end{align*}
\end{gather*}
it is sufficient to solve the extension problem:
\begin{equation}\label{ext_problem_of_general_p}
H^0(X^{p+1},\idI_{\Sigma^{(p)}}
\tens L\etens\cdots\etens L\etens L^{\tens(b+1)}) \mapto
H^0(D_{1,p},\idI_{\Sigma^{(p)}}\tens 
L\etens\cdots\etens L^{\tens (b+1)}).
\end{equation}
Let be the $\alpha :Y=Bl_{D_{1,p}} X^{p+1}\mapto X^{p+1}$ blowup and $E$ be the exceptional divisor. Then, we apply the same argument as before. We require
$$
qc \ge ((p+1)n+\epsilon)\cdot e\frac{(2+\epsilon')}{\tau}
$$
to obtain the desired curvature estimate:
\begin{equation*}
\cxi\curv(\alpha^*K_{X_s}\etens \cdots \etens K_{X_s})
\ge (n+\epsilon)\cxi\curv(E)
\end{equation*}
Thus, the morphism in (\ref{ext_problem_of_general_p}) is surjective as desired and $K_X+qM$ has Property $N_p$. In particular, if $M=K_X, q=1$, for enough high tower $X_s$, the injectivity radius will be sufficient large. Hence, $2K_{X_s}$ will enjoy Property $N_p$ for $s\gg 0$ and we complete the proof.

%%%%%%%%%%%%%%%%%%%%%%%%%%%%%%%%%%%%%%%%%%%

\section{Proof of Corollary \ref{cor_div_thm_wi_smll_gaps} and Theorem \ref{thm_N_0_property_of_curves}} \label{sec_pf_of_cor_and_thm_on_curve}
\subsection{Normality of Riemann Surfaces}\label{sec_normal_of_riemann_surf}
In this section, we will use the same framework and techniques to handle projective normality of Riemann surfaces. It is well known that Property $N_0$ is equivalent to projective normality, namely, if $L$ is projective normal if
$$
Sym^k H^0(X,L)\mapto H^0(X,L^{\tens k})
$$
is surjective (cf. \cite{ref_green&lazarsfeld86}, Introduction). Particularly, if one can show that
$$
\beta_k: H^0(X,L)\tens H^0(X,L^{\tens (k-1)})\mapto H^0(X,L^{\tens k})
$$
is surjective for every $k$, then $L$ is projective normal. In our case, $L$ is an adjoint bundle, i.e. $L=K_X+M$. For sufficient large $k$, one can show that $\beta_k$ is surjective by Skoda's division theorem. Thus, the difficulty lies in the case when $k$ is small, especially, when $k=2$. The key is the natural isomorphisms we utilized in Section \ref{subsubsec_setting_for_applying_ext_thm}:
\begin{gather*}
V\tens H^0(X,L^{\tens(k-1)}) \isom 
H^0(X\times X,\pi_1^*L\tens \pi_2^*L^{\tens (k-1)}) \\
H^0(X,L^{\tens k})\isom H^0(D,\pi_1^*L\tens \pi_2^*L^{\tens (k-1)}),
\end{gather*}
and the blowup diagram:
$$
\xymatrix{
H^0(Y,\alpha^* L\etens L^{\tens(k-1)}) \ar[r]^-{res}\ar[d]^\parallel &
H^0(E,\alpha^* L\etens L^{\tens(k-1)})\ar[d]^\parallel \\
H^0(X\times X, L\etens L^{\tens(k-1)})\ar[r]^-{res} &
H^0(D, L\etens L^{\tens(k-1)})
},
$$
where $\alpha:Y=Bl_D X\times X$ is the blow-up along the diagonal. Then, we intend to apply the technique of extension similar to Section \ref{subsec_curv_of_shO_Y(E)} to show that the restriction map
$$
\beta_k:H^0(Y,\alpha^* L\etens L^{\tens(k-1)})
\mapto H^0(E,\alpha^* L\etens L^{\tens(k-1)})
$$
is surjective.

Recall the curvature estimate of $\shO_Y(E)$ (\ref{ddbar_log_h_v_ineq}) and (\ref{pos_part_of_ddbar_log_h_v}):
\begin{equation}
\begin{split}
-\p_k\p_{\bar\ell}\log h\, v^k\bar v^\ell &\le 
\p_k\p_{\bar\ell}\chi\, v^k\bar v^\ell+
\p_k\p_{\bar\ell}\varphi\, v^k\bar v^\ell+
\frac{2}{(\rho e^{-\varphi})^2}
|\p_k\rho \p_{\bar\ell}e^{-\varphi}v^k \bar v^\ell| \\
&\le \p_k\p_{\bar\ell}\chi\, v^k\bar v^\ell+
\p_k\p_{\bar\ell}\varphi\, v^k\bar v^\ell+
\frac{e |\chi'|}{e^{-\varphi}}
(\underbrace{|\p_k\sigma v^k|^2}_{(d)}+ 
\underbrace{|\p_{\bar\ell}\varphi \bar v^\ell|^2)}_{(e)}.
\end{split}
\end{equation}
When the injectivity radius $\tau$ is large, such as
$$\tau \ge \sqrt{e(2+\epsilon')},$$
we can apply the estimate in Section \ref{subsec_est_of_(d)&(e)} and use the negativity of $\shO(-1)$ to control the term (e). For term (d), we aim to find a global coordinate to construct a metric to take over it. Since $X$ is a Riemann surface, the universal cover $\tilde{X}$ is $\Proj^1,\cxC$ or a disc $B(0,1)$. By removing the branch points and the brach cuts, the fundamental domain $\Omega\subset \tilde X$ is biholomorphic to an open set $U\subset X$. By Riemann's theorem, $U$ is further biholomorphic to a disc. Thus, we have a global coordinate $z$ which enables us to construct an appropriate plurisubharmonic function with compact support to dominate term (d). Recall the estimate of the last term in (\ref{final_est}):
\begin{align*}
\frac{e |\chi'|}{e^{-\varphi}}|\p_k\sigma v^k|^2 \le
\frac{e(2+\epsilon')}{\tau^2}\cdot \tau |v|^2
=\frac{e(2+\epsilon')}{\tau}|v|^2.
\end{align*}
We aim to construct a plurisubharmonic function $\eta_\tau$ such that
$$
\p_z\p_{\bar z} \eta_\tau \ge \frac{e(2+\epsilon')}{\tau}.
$$
Consider
$$
\eta_\tau = 
\begin{cases}
c_1\tau |\frac{z}{\tau}|^2 & z<\tau \\
c_1\tau & z = \tau
\end{cases}
$$
where $c_1$ is a constant to be determined. Then,
\begin{align*}
\p_z\p_{\bar z}\eta_\tau &= c_1\tau \cdot \frac{1}{\tau^2}
\p_w\p_{\bar w} |w|^2
\; \text{ (where $w=\frac{z}{\tau}$)}\\
&= \frac{c_1}{\tau},\; \text{ on $|w|<1$}.
\end{align*}
Thus, we can take
\begin{equation}
f_\tau=e^{-\eta_\tau},\; \text{ and $c_1=e(2+\epsilon')$}
\end{equation}
so that $f_\tau$ is globally defined on $X$ because the extended values on the branch points and branch cuts are $1$ by taking the limit, and the curvature of the weight function $f_\tau$ is
$$
-\p\dbar \log f_\tau=\p\dbar \eta_\tau \ge 
\frac{e(2+\epsilon')}{\tau}.
$$
Therefore, we equip $L$ with the metric
$$e^{-(\varphi_M+\frac{\chi}{q}+\eta_\tau)},$$ 
which has the desired curvature estimate
\begin{align*}
q\cxi\curv(\alpha^*M\etens M)\ge (1+\epsilon)\cxi\curv(E).
\end{align*}
Hence, every section of $H^0(E,\alpha^* L\etens L^{\tens(k-1)})$ is extendible, and we finish the proof of Theorem \ref{thm_N_0_property_of_curves}.
\begin{rmk}
In applying the extension theorem, we require a smooth weight function so that every section on the extension center is $L^2$ finite. Here we skip a standard technical detail. In order to make the weight function smooth, we need a family of smoothifiers to smooth out the conner of $f_\tau$ at $z=\tau,0$, i.e. $w=1,0$. By taking limit, we can still obtain the desired estimates and extend the section.
\end{rmk}

%...................................................................................................

\subsection{Division Theorem with Small Power Difference}
The key step in proving Theorem \ref{thm_N_0_property_of_curves} is to find a global coordinate. The setting of Corollary \ref{cor_div_thm_wi_smll_gaps}, the division theorem with small power difference, assures the existence of such coordinate, so by introducing the function $f_\tau$ constructed in Section \ref{sec_normal_of_riemann_surf} and the estimates in Section \ref{subsec_curv_of_shO_Y(E)} and Section \ref{subsec_est_of_(d)&(e)}, Corollary \ref{cor_div_thm_wi_smll_gaps} follows.

%%%%%%%%%%%%%%%%%%%%%%%%%%%%%%%%%%%%%%%%%%%%%%%%%%%%%%%%%%%%%%%%%%%%%%%%%%

%Reference

%\newpage

\begin{bibdiv}
\begin{biblist}

\bibselect{ref}

\end{biblist}
\end{bibdiv}

\end{document}